\documentclass[10pt]{amsart}
\usepackage{amsfonts}
\usepackage{amsmath}
\usepackage{amssymb}
\usepackage{graphicx}
\usepackage{enumerate}
\usepackage{multicol}
\usepackage{mathrsfs}

\newtheorem{theorem}{Theorem}
\newtheorem{corollary}{Corollary}
\newtheorem{lemma}{Lemma}
\newtheorem{definition}{Definition}
\newtheorem{proposition}{Proposition}
\theoremstyle{remark}
\newtheorem{remark}{Remark}
\newtheorem{example}{Example}

\DeclareMathOperator{\real}{{\mathbb{R}}}

\DeclareMathOperator{\calO}{\mathcal{O}}
\DeclareMathOperator{\calD}{\mathcal{D}}
\DeclareMathOperator{\calV}{\mathcal{V}}
\DeclareMathOperator{\calE}{\mathcal{E}}
\DeclareMathOperator{\calF}{\mathcal{F}}
\DeclareMathOperator{\calR}{\mathcal{R}}

\DeclareMathOperator{\scrW}{\mathscr{W}}
\DeclareMathOperator{\scrE}{\mathscr{E}}

\DeclareMathOperator{\spn}{span}

\DeclareMathOperator{\rank}{rank}

\DeclareMathOperator{\SO}{SO}

\DeclareMathOperator{\GL}{GL}

\DeclareMathOperator{\so}{\mathfrak{so}}
\DeclareMathOperator{\gl}{\mathfrak{gl}}
\DeclareMathOperator{\frakg}{\mathfrak{g}}
\DeclareMathOperator{\Ad}{Ad}

\DeclareMathOperator{\Lie}{Lie}

\title[Controllability of rolling in higher dim.]{Controllability of rolling without twisting or slipping in higher dimensions}
\author[E. Grong]{Erlend Grong}

\address{Department of Mathematics, University of Bergen, Norway.}
\email{erlend.grong@math.uib.no}

\subjclass[2000]{37J60, 53A55, 53A17}

\keywords{Rolling maps, controllability, bracket-generating, frame bundles, nonholonomic constraints}

\begin{document}
\maketitle

\begin{abstract}
We describe how the dynamical system of rolling two $n$-dimensional connected, oriented Riemannian manifolds $M$ and $\widehat M$ without twisting or slipping, can be lifted to a nonholonomic system of elements in the product of the oriented orthonormal frame bundles belonging to the manifolds. By considering the lifted problem and using properties of the elements in the respective principal Ehresmann connections, we obtain sufficient conditions for the local controllability of the system in terms of the curvature tensors and the sectional curvatures of the manifolds involved.
We also give some results for the particular cases when $M$ and $\widehat M$ are locally symmetric or complete.
\end{abstract}

\section{Introduction}
The rolling of two surfaces, without twisting or slipping, is a good illustration of a nonholonomic mechanical system,  whose properties are intimately connected with geometry. It has therefore received much interest, and we can mention \cite{A,BH, BM, MB} and \cite[Chapter 24]{AS} as examples of research produced in this area.
In particular, the treatment of rolling in \cite{AS,BH} was done by formulating it as an intrinsic problem, independent of the imbedding of the surfaces into Euclidean space.

The generalization of this concept to that of an $n$-dimensional manifold rolling without twisting or slipping on the $n$-dimensional Euclidean space, is well known (see e.g. \cite[p. 268]{IW}, \cite[Chapter 2.1]{Hsu}). It is usually formulated intrinsically, in terms of frame bundles, and is an important tool in stochastic calculus on manifolds. A definition for two arbitrary $n$-dimensional manifolds rolling on each other without twisting or slipping, first appeared in \cite[App. B]{Sharpe}, however, this only dealt with manifolds imbedded into Euclidean space. An intrinsic definition for rolling of higher dimensional manifolds, that connected the definition in \cite{Sharpe} with the intrinsic approach in \cite{AS,BH}, was presented in \cite{GGMS}. Apart from appearing as mechanical systems, rolling of higher dimensional manifolds can be also used as a tool in interpolation theory. For demonstration of the ``rolling and wrapping''-technique, we refer to \cite{HS}. See also \cite{Shen} for an example where this is applied in robot motion planning.

For the rolling of two 2-dimensional manifolds, there is a beautiful correspondence between the degree of control and the geometry of the manifolds \cite{AS,BH}. Essentially, we have complete control over our dynamical system if the respective Gaussian curvatures $M$ and $\widehat M$ do not coincide.
Controllability in higher dimensions has been addressed in some special cases \cite{JZ,Zimm}.
The first general result on controllability in higher dimension is presented in \cite{Chitour}, where it is shown that the curvature tensors determine the brackets of the distribution obtained from the constraints given by neither twisting, nor slipping. It is also proved that being able to find a rolling to an arbitrary close configuration, connecting the same two points, is a sufficient condition for complete controllability.

The objective of this paper will be to describe the connection between geometry and controllability for rolling of higher dimensional manifolds. We do this by connecting the earlier mentioned viewpoint from stochastic calculus with the one presented in \cite{GGMS}.

This paper is organized as follows. In Section 2, we state the intrinsic definition of rolling. We present some of the theory of frame bundles, and develop our notation there. We end this section by showing how we can lift our problem to the oriented orthonormal frame bundles of the involved manifolds. We continue in section Section 3, by doing computations on the lifted problem. By using properties of the sections in the Ehresmann connections, we obtain formulas for computation of the brackets of the rolling distribution. In Section 4 we project the results back to our configuration space of relative positions of the manifolds. Section 4.2 consist of conditions for controllability in terms of the Riemann curvature tensor and the sectional curvature of the manifolds involved. We end this section with some examples. Section 5 focuses on results concerning the rolling of locally symmetric and complete manifolds. Section 6 contains a brief comment on how to generalize the concept of rolling without twisting or slipping to manifolds with an affine connection, and why the results presented here also holds for a rolling of manifolds with a torsion free affine connection.

The author would like to express his gratitude to Mauricio Godoy Molina for many fruitful discussions concerning this subject.

\section{Intrinsic definition of rolling and its relations to frame bundles}
\subsection{Intrinsic definition of rolling without twisting or slipping} \label{definition}
Throughout this paper, $M$ and $\widehat M$ will denote connected, oriented, $n$-dimensional Riemannian manifolds. Since in the special case $n=1$, the conditions of rolling without twisting or slipping become holonomic (see \cite{GGMS}), we will always assume that $n \geq 2$. We adopt the convention to equip objects (points, projection, etc.) related to $\widehat{M}$ with a hat (\^{}). Objects related to both of them are usually denoted by a bar (\={}), while objects connected to $M$ are not given any special distinction. The exception to this rule is the Riemannian metric and the affine Levi-Civita connection which are respectively denoted by $\langle \cdot , \cdot \rangle$ and $\nabla$ on both $M$ and $\widehat{M}$. The context will make it clear which manifold these objects are related to. For a vector field $X$ on $M$, we will write $X|_m$ rather than $X(m)$, and we will use similar notation for other sections of bundles.

For any pair of oriented inner product spaces $V$ and $\widehat V$, we let $\SO(V,\widehat V)$ denote the space of all orientation preserving linear isometries from $V$ to $\widehat V$. This allows us to define the $\SO(n)$-fiber bundle~$Q$ over $M \times \widehat{M}$ by
$$Q = \left\{ q \in \SO\left(T_mM, T_{\widehat{m}} \widehat{M}\right) : \, m \in M, \widehat{m} \in \widehat{M}\right\}.$$
We can be sure that this fiber bundle is principal in the case when $n=2$, but not in general. The space $Q$ represents all configurations or relative positions of $M$ and $\widehat{M}$, so that the two manifolds lie tangent to each other at some pair of points. The isometry $q:T_m M \to T_{\widehat{m}}\widehat{M},$ represents a configuration where $M$ at $m$ lies tangent to $\widehat{M}$ at $\widehat{m}$. The relative positioning of their tangent spaces is given by how $q$ maps $T_mM$ into $T_{\widehat{m}}\widehat{M}$. A rolling then becomes a curve in the space of these configurations.

\begin{definition} \label{defrolling}
Let $\pi$ and $\widehat \pi$ denote the respective natural projections from $Q$ to $M$ and $\widehat M$. A rolling without twisting or slipping is an absolutely continuous curve
$$q:[0, \tau] \to Q, \quad \text{with} \quad m(t) := \pi(q(t)), \quad \widehat m(t) := \widehat \pi(q(t)),$$
satisfying the following conditions: \\
\begin{tabular}{rl} No slip condition: & $\dot{\widehat m}(t) = q(t) \dot m(t)$ for almost every $t$, \\ 
No twist condition: & an arbitrary vector field $X(t)$ is parallel along $m(t)$ \\
& if and only if $q(t)X(t)$ is parallel along $\widehat m(t)$. \end{tabular}
\end{definition}
From now on we will mostly refer to a rolling $q(t)$ without twisting or slipping as simply a rolling.

These two conditions can be described in terms of a distribution $D$ of rank $n$ on $Q$. Consider the problem of finding the rolling $q(t)$ between two different configurations $q_0$ and $q_1$ such that $m(t)$ (and thereby also $\widehat m(t)$) has minimal length. Here, by a distribution on $Q$, we mean a sub-bundle of $TQ$. This type of curves can be viewed as optimal curves in an input-linear drift free optimal control problem or length minimizers in a sub-Riemannian manifold. We will describe this structure in more detail in Section \ref{rollingdistribution}. First, however, we will review some facts about connections and frame bundles.

\subsection{From principal bundles to oriented orthonormal frame bundles}
Consider a general principal $G$-bundle $\tau:P \to M$, where the Lie group $G$ acts on the right. We call the sub-bundle $\calV := \ker \tau_*$ of $TP$ {\it the vertical space of} $P$. If $\frakg$ is the Lie algebra of $G$, then for any element $A \in \frakg$, we have a vector field $\sigma(A)$ defined by
\begin{equation} \label{VAcor} \sigma(A)|_p\phi = \left. \frac{d}{dt} \right|_{t=0} \phi\left(p \cdot \exp_G(tA)\right), \qquad \text{for any } p \in P, \phi \in C^\infty(P).\end{equation}
Here, $\exp_G: \frakg \to G$ is the group exponential. We remark that $\sigma(A)$ is a section of $\calV$ and for any $p \in P$, the map $\frakg \to \calV_p, A \mapsto \sigma(A)|_p$ is a linear isomorphism.

A sub-bundle $\calE$ of $TP$ is called {\it a principal Ehresmann connection} if $TP= \calE \oplus \calV$ and satisfy $r_g \calE_p= \calE_{p\cdot g}$, where $r_g$ denotes the right multiplication of $g \in G$. Equivalently, we can consider {\it a principal connection form}, which is a $\frakg$-valued one-from satisfying the two conditions
$$r^*_g\omega = \Ad(g^{-1}) \omega,\quad  \text{ and }\quad  \omega(\sigma(A)|_p) = A \text{ for any } A \in \frakg, p \in P.$$
There is a one-to-one correspondence between these two structures, in the sense that $\ker \omega$ is a principal Ehresmann connection when $\omega$ is a principal connection form, and for any principal Ehresmann connection $\calE$, we can define a principal connection form by formula
$$\omega(v) = A \text{ for $v \in T_pP$ if } v - \sigma(A)|_p \in \calE_p.$$

Related to a choice of Ehresmann connection $\calE$ we also have horizontal lifts, since the mapping $\tau_*|_{\calE_p}:\calE_p \to T_{\tau(p)}P$ is a linear isometry. For a vector $v \in T_mM$, we define the horizontal lift $h_p v$ of $v$ at $p \in \tau^{-1}(m)$ as the unique element in $\calE_p$ which is projected to $v$ by $\tau_*$.

Let us consider a particular principal bundle over a manifold $M$. For two vector spaces $V$ and $\widehat V$, let $\GL(V, \widehat V)$ be the space of all linear isomorphisms from $V$ to $\widehat V$. For any $m\in M$, we say that a frame at $m$ is a choice of basis $f_1, \dots, f_n$ for $T_mM$. Equivalently, we can consider a frame as a linear map $f \in \GL(\real^n,T_mM)$. The correspondence between the two point of views, is given by
$$f\! \underbrace{(0, \dots, 0, 1, 0, \dots, 0)}_{\text{$1$ in the $j$th coordinate}} = f_j.$$
We write $\calF_m(M) = \GL(\real^n,T_mM)$ for the space of frames in $T_mM$. There is a natural action of $\GL(n)$ on these spaces by composition on the right. Using this action, we define {\it the frame bundle} $\widetilde \tau:\calF(M) \to M$ as the principal $\GL(n)$-bundle with fiber over $m$ being $\calF_m(M)$.

Similarly, if $M$ is an oriented Riemannian manifold, we can define {\it the oriented orthonormal frame bundle} $\tau:F(M) \to M$ as the principal $\SO(n)$-bundle whose fiber is $F_m(M):=\SO(\real^n, T_mM)$. Here, $\real^n$ is furnished with the standard orientation and the Eucliean metric.

Let $\nabla$ be an affine connection on $M$, seen as an operator on vector fields
$(X , Y) \mapsto~\nabla_X Y.$
Then we can associate a principal Ehresmann connection $\calE_\nabla$ on $\widetilde \tau:\calF(M) \to~M$ to $\nabla$, by defining $\calE_\nabla$ to be the distribution on $\calF(M)$ consisting of tangent vectors of smooth curves $f(t)$ such that the vector fields $f_1(t),\dots, f_n(t)$ are all parallel along $m(t):=\widetilde \tau(f(t))$. If $M$ is Riemannian and oriented, and if $\nabla$ is compatible with a metric, then $\calE_\nabla$ can be defined as a distribution on $F(M)$ instead, since both positive orientation and orthonormality are preserved under parallel transport.

We now go to the concrete case where $\nabla$ is the Levi-Civita connection.
Define {\it the tautological one-from} $\theta = (\theta_1, \dots, \theta_n)$ on $F(M)$, as the $\real^n$-valued one-form
$$\theta_j|_f = \tau^*(\flat f_j), \qquad \tau:F(M) \to M,$$
where $\flat:TM \to T^*M$ is the isomorphism induced by the Riemannian metric.
In other words, if $v \in T_fF(M)$, then $\theta_j(v) = \langle f_j, \tau_* v \rangle.$ Denote the $\so(n)$-valued principal connection form corresponding to the Levi-Civita connection by $\omega$. The formulas for the differentials of $\theta$ and $\omega$ are given by the well-known Cartan equations. We express them in notations, that will be helpful for later purposes.

Let $R$ be the Riemann curvature tensor, defined by
$$R(Y_1, Y_2, Y_3, Y_4) = \left\langle R(Y_1,Y_2) Y_3, Y_4 \right\rangle, \text{where }
R(Y_1, Y_2) = \nabla_{Y_1} \nabla_{Y_2} -\nabla_{Y_2} \nabla_{Y_1} - \nabla_{[Y_1,Y_2]}.$$
Furthermore, use it to define the curvature form $\Omega = (\Omega_{ij})$, as the $\so(n)$-valued two-form
\begin{equation} \label{OmegaDef} \Omega_{ij}(v_1,v_2) = R(\tau_* v_1, \tau_* v_2, f_j, f_i), \qquad v_1, v_2 \in T_f F(M).\end{equation}
Then the following relations hold
\begin{equation} \label{Cartanequations}
\begin{array}{rl} d\theta_j + \sum_{i =1}^n\omega_{ji} \wedge \theta_i = &  0, \\ \\
d\omega_{ij} + \sum_{k =1}^n \omega_{ik} \wedge \omega_{kj}  = & \Omega_{ij}, \end{array} \end{equation}
where $\omega_{ij}$ are the matrix entries of $\omega$.
These equations are going to be important in order to understand the connections between geometry and the control system of rolling manifolds.
\begin{remark}
The Cartan equations can also be defined on the frame bundle corresponding to a general affine connection. See, e.g., \cite{Jost, Sharpe} for details.
\end{remark}

From now on, we will adopt the convention that whenever we mention $\real^n$, it will always come furnished with the standard orientation and the Euclidean metric.

\subsection{The rolling distribution and the corresponding sub-Riemannian structure} \label{rollingdistribution}
The tangent vectors of all possible rollings form an $n$-dimensional distribution $D$ on $Q$. We will call this distribution $D$ the rolling distribution. A curve $q(t)$ is a rolling, if and only if, it is horizontal with respect to $D$, i.e. it is absolutely continuous and $\dot{q}(t) \in D_{q(t)}$ for almost any $t$.

We present the following local description of $D$ (see \cite{GGMS} for more details). Let us write the projection of $Q$ to $M \times \widehat M$, as $\overline \pi: Q \to M \times \widehat M.$
Given any sufficiently small neighborhood $U$ on $M$, let $e$ be a local section of the oriented orthogonal frame bundle $F(M)$ with domain $U$. We write this local section as $(e,U)$. Let $(\widehat e, \widehat U)$ be a similar local section of $F(\widehat M)$. We can use these local sections to trivialize the fiber bundle $Q$ over $U \times \widehat U$, with the map
$$q \in \SO(T_mM, T_{\widehat m} \widehat M) \mapsto (m, \widehat m, (q_{ij})) \in U \times \widehat U \times \SO(n), \qquad q_{ij} := \langle \widehat e_j, q e_j \rangle.$$
Then the distribution $D|_{\overline \pi^{-1}(U \times \widehat U)}$, in the above coordinates, 
is spanned by the vector fields
\begin{equation} \label{basisD} \overline e_j: = e_j + qe_j + \sum_{1 \leq \alpha \leq \beta \leq n} \left(\left\langle e_\alpha, \nabla_{e_j} e_\beta \right\rangle - \left\langle qe_\alpha, \nabla_{qe_j} qe_\beta \right\rangle \right) W_{\alpha\beta}^\ell.\end{equation}
where $j =1,\dots, n$. Here, $e_j$ is seen as a vector field on $Q|_{U \times \widehat U} = \overline \pi^{-1}(U \times \widehat U)$ and $qe_j$ stands for the vector field $q \mapsto q (e_j|_{\pi(q)}).$
The vector fields $W_{\alpha\beta}^\ell$ are defined by
\begin{equation} \label{Wbasisl} W_{\alpha\beta}^\ell = \sum_{s=1}^n \left(q_{s\alpha} \frac{\partial}{\partial q_{s\beta}}
- q_{s\beta} \frac{\partial}{\partial q_{s\alpha}} \right).\end{equation}
The symbol $\ell$ here is not a parameter; it simply stands for ``left'' (an explanation of this will follow in Remark \ref{localrightremark}).

Let us consider the optimal control problem finding of a rolling $q(t)$ connecting two configurations $q_0$ and $q_1$, such that the curve $m(t) = \pi(q(t))$ has minimal length. We do this by introducing a metric $\langle \cdot , \cdot \rangle$ on $D$, defined by
\begin{equation}\label{metricD} \langle \overline v_1, \overline v_2 \rangle = \langle \pi_* \overline v_1, \pi_* \overline v_2 \rangle, \qquad \overline v_1, \overline v_2 \in D_q.\end{equation}
With respect to this metric, the $\overline e_j$ in \eqref{basisD} are a local orthonormal basis. Also, from the definition of $D$, we have $\langle \overline v_1, \overline v_2 \rangle = \langle \widehat \pi_* \overline v_1, \widehat \pi_* \overline v_2 \rangle$. Hence, minimizing the length of $m(t)$ is equivalent to minimizing the length of $\widehat m(t) = \widehat \pi (q(t))$ (this is can also be seen from the no-slip condition).
\begin{definition}
A triple $(Q, D, \langle \cdot , \cdot \rangle)$, where $Q$ is a connected manifold, $D$ is a distribution on $Q$ and $\langle \cdot , \cdot \rangle$ is a metric on $D$, is called a sub-Riemannian manifold.
\end{definition}
The sub-Riemannian distance function $d(q_0,q_1)$ between two points is defined as the infimum of the length of all curves which are horizontal to $D$. We can view rollings between configuration from $q_0$ to $q_1$ along a curve of minimal length as a sub-Riemannian length minimizer in $Q$.

For the distance function to be finite, we need that every pair of configurations $q_0$ and $q_1$ can be connected by a curve horizontal to $D$. In other words, we need to determine when rolling without twisting or slipping is a controllable system. Our goal is to give sufficient conditions for this to hold, in terms of geometric invariants on $M$ and $\widehat M$. We will leave the question of finding optimal curves for later research.

\begin{remark} \label{localrightremark}
The vector fields $W_{\alpha\beta}^\ell$ in \eqref{Wbasisl} can be considered as a ``locally left invariant" basis of $\ker \overline \pi_*$. It will be practical to also introduce a ``locally right invariant" analogue. Relative to two chosen local sections $(e, U)$ and $(\widehat e, \widehat U)$, define
\begin{equation} \label{Wbasisr} W_{\alpha\beta}^r = \sum_{s=1}^n \left(q_{\beta s} \frac{\partial}{\partial q_{\alpha s}}
- q_{\alpha s} \frac{\partial}{\partial q_{\beta s}} \right), \quad q_{ij} := \langle \widehat e_i, qe_j \rangle.\end{equation}
Notice that $W_{\alpha\beta}^r =\sum_{l,s}^n q_{\alpha l} q_{\beta s} W_{ls}^\ell.$
\end{remark}

\subsection{Controllability and brackets}
Given an initial configuration $q_0 \in Q$, write $\calO_{q_0}$ for all points in $Q$ that are reachable by a rolling starting from $q_0$. This will be the orbit of $D$ at $q_0$, which coincides with the reachable set of $D$, since $D$ is a distribution (see, e.g., \cite{AS, Jur} for details). The Orbit Theorem \cite{Hermann,Suss} tells us that 
$\calO_{q_0}$ is a connected, immersed submanifold of $Q$, but also that the size can be approximated by the brackets of $D$. Define the $C^\infty(Q)$-module $\Lie \, D$ as the limit of the process
$$D^1 = \Gamma(D), \qquad D^{k+1} = D^k + [D, D^k].$$
$\Gamma(D)$ denotes the sections of $D$. Let $D_q^k$ and $\Lie_q D$ be the subspaces of $T_qQ$ obtained by evaluating respectively $D^k$ and $\Lie D$ at $q$.
Then, for any $q \in \calO_{q_0}$,
\begin{equation} \label{lowerLie} \Lie_q D \subseteq T_q\calO_{q_0}.\end{equation}
In particular, it follows from \eqref{lowerLie}, that if $D$ is {\it bracket generating} at $q$,
i.e., if $\Lie_q D = T_qQ$, then $\calO_{q_0}$ is an open submanifold of $Q$, and we say that we have {\it local controllability} at $q_0$.
If $\calO_{q_0}=Q$ for one (and hence all) $q_0 \in Q$, the system is called {\it completely controllable}.

The least amount of control happens when $\calO_{q_0}$ is $n$-dimensional submanifold. As a consequence of the Orbit theorem and Frobenius theorem, this happens if and only if $D|_{\calO_{q_0}}$ is involutive, that is, if
$\Lie_q D = D_q, \text{ for every } q \in \calO_{q_0}.$

The focus of this paper will be to provide results of controllability, by investigating when the distribution $D$ will be bracket generating at a given point $q$.

\begin{remark} \label{locfg}
When $D$ is not bracket generating at $q$, $\Lie_q D$ will in general only give us a lower bound for the size of $\calO_q$. However, if $\Lie D$ is locally finitely generated as a $C^\infty(Q)$-module, i.e., has a finite basis of vector field when restricted to a sufficiently small neighborhood, then the equality holds in \eqref{lowerLie}. \end{remark}

\begin{remark}
We will use the notation introduced here for distributions in general, not just for the rolling distribution.
\end{remark}

\subsection{Relationship between frame bundles and rolling} \label{frametorolling}
Consider the linear Lie algebra $\so(n)$. For integers $\alpha$ and $\beta$ between 1 and $n$ and not equal, let $w_{\alpha\beta}$ be the matrix with $1$ at entry $\alpha\beta$, -1 at entry $\beta\alpha$, and zero at all other entries. Clearly $w_{\alpha\beta} = - w_{\beta\alpha}$. Define $w_{\alpha\alpha} = 0.$
The commutator bracket between these matrices are given by
\begin{equation} \label{bracketson}
[w_{\alpha\beta}, w_{\kappa\lambda}] = \delta_{\beta,\kappa} w_{\alpha\lambda}
+ \delta_{\alpha,\lambda} w_{\beta\kappa} - \delta_{\alpha,\kappa} w_{\beta\lambda}
- \delta_{\beta,\lambda} w_{\alpha\kappa},\end{equation}
The collection of all $w_{\alpha\beta}$ with $\alpha < \beta$ form a basis for $\so(n)$.

Consider the principal bundle $F(M)\to M$. We introduce the following notation.  Write $\sigma_{\alpha\beta}$ for the vector fields on $F(M)$ corresponding to $w_{\alpha\beta}$ in the sense of \eqref{VAcor}. Let $\calE$ be the principal Ehresmann connection corresponding to the affine Levi-Civita connection on $M$, with corresponding principal connection form $\omega$. As before, we denote the tautological one-form by $\theta$. By using horizontal lifts with respect to $\calE$, we can define vector fields $X_j$ on $F(M)$ by
\begin{equation} \label{basisEhresmann} X_j|_f := h_f f_j,\end{equation}
which satisfy $\theta_i(X_j) = \delta_{i,j}.$ Hence, the tangent bundle of $F(M)$ is trivial, since it is spanned by $\{X_j\}_{j=1}^n$ and $\{\sigma_{\alpha\beta}\}_{\alpha<\beta}$.  Finally, we define $\widehat \sigma_{\alpha\beta}, \widehat X_j, \widehat \omega$ and $\widehat \theta$ similarly on $F(\widehat M)$.

The configuration space $Q$ may be identified with $F(M) \times F(\widehat M)$ quotiented out by the diagonal action of $\SO(n)$. Let $\varpi$ denote the principal $\SO(n)$-bundle
$$\varpi: F(M) \times F(\widehat{M}) \to F(M) \times F(\widehat{M})/\SO(n) \cong Q.$$
Then $\varpi(f,\widehat f) = q$, if $\widehat f = q \circ f.$
By viewing $\omega, \widehat \omega, \theta$ and $\widehat \theta$ as forms on $F(M)\times F(\widehat M)$, we are able to obtain the following result.

\begin{theorem} \label{mathcalDtoD}
Let
$$\calD = \ker \omega \cap \ker \widehat \omega \cap \ker(\theta - \widehat \theta),$$
and let $D$ be the rolling distribution. Then $\varpi_* \calD = D$, and the map is a linear isomorphism on every fiber.
\end{theorem}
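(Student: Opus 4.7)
The plan is to proceed in three stages. First I would verify that $\calD$ is invariant under the diagonal $\SO(n)$-action on $F(M) \times F(\widehat M)$, so that $\varpi_*\calD$ descends to a well-defined distribution on $Q$. Next I would show that $\varpi_*|_\calD$ is injective on each fiber, which together with a dimension count will give the fiberwise linear isomorphism claim once the image is identified. Finally I would exhibit each element of $\varpi_*\calD$ as the velocity of a rolling, obtaining $\varpi_*\calD \subseteq D$, and conclude by matching ranks.

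Invariance is a bookkeeping check using $r_g^*\omega = \Ad(g^{-1})\omega$, $r_g^*\theta = g^{-1}\theta$ and the hatted analogues: each of these transforms identically on the two factors under the diagonal action, so all three of $\ker\omega$, $\ker\widehat\omega$, and $\ker(\theta - \widehat\theta)$ are preserved. Injectivity is immediate from the structure of $\varpi$: at $(f, \widehat f)$ the kernel of $\varpi_*$ is the tangent space to the diagonal orbit, spanned by the vector fields $\sigma_{\alpha\beta} + \widehat\sigma_{\alpha\beta}$, and evaluating $\omega$ on such a vector returns $w_{\alpha\beta} \neq 0$, so $\ker\varpi_* \cap \calD = 0$. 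A quick count of the defining constraints gives $\rank \calD = n$, which matches $\rank D$, so once $\varpi_*\calD \subseteq D$ is in place, $\varpi_*|_\calD$ will automatically be a fiberwise linear isomorphism onto $D$.

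For the inclusion $\varpi_*\calD \subseteq D$, I would take $(v, \widehat v) \in \calD_{(f, \widehat f)}$, pick any smooth curve $(f(t), \widehat f(t))$ tangent to $\calD$ with this initial velocity, and set $q(t) := \widehat f(t) \circ f(t)^{-1}$, so that $\varpi(f(t), \widehat f(t)) = q(t)$. The vanishing $\omega(\dot f(t)) = 0$ forces each $f_j(t)$ to be parallel along $m(t) = \tau(f(t))$; by the same token each $\widehat f_j(t) = q(t)f_j(t)$ is parallel along $\widehat m(t)$. Writing an arbitrary vector field as $X(t) = \sum_j a_j(t) f_j(t)$, the parallelism of $X$ along $m(t)$ is equivalent to constancy of the coefficients $a_j$, equivalent to the parallelism of $q(t)X(t) = \sum_j a_j(t)\widehat f_j(t)$ along $\widehat m(t)$, giving the no-twist condition. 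For no-slip, expand $\dot m(t) = \sum_j \theta_j(\dot f) f_j(t)$ and $\dot{\widehat m}(t) = \sum_j \widehat\theta_j(\dot{\widehat f}) \widehat f_j(t)$; the condition $\theta(\dot f) = \widehat\theta(\dot{\widehat f})$ then yields $\dot{\widehat m}(t) = \sum_j \theta_j(\dot f)\, q(t) f_j(t) = q(t)\dot m(t)$. Hence $q(t)$ is a rolling and $\varpi_*(v, \widehat v) \in D_{q(0)}$.

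The main obstacle is conceptual rather than computational: correctly reading off that the combined form $\theta - \widehat\theta$ on $F(M) \times F(\widehat M)$ precisely encodes the no-slip condition under the nontrivial identification $\widehat f_j = q f_j$, and that $\omega$ together with $\widehat\omega$ encodes no-twist. Once this dictionary is in place, the verifications above become routine.
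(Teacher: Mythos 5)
Your proof is correct, but it takes a genuinely different route from the paper's. The paper works entirely in local coordinates: it introduces a local trivialization of $F(M)\times F(\widehat M)$, writes $X_j$ and $\widehat X_j$ explicitly in terms of the right-invariant vertical fields $\Psi_{\alpha\beta}^r$, computes $\varpi$ and $\varpi_*$ in these coordinates, and verifies directly that $\overline e_j = \varpi_*\sum_{s} f_{js}(X_s + \widehat X_s)$. Your argument is coordinate-free: you establish equivariance of $\calD$ under the diagonal $\SO(n)$-action from $r_g^*\omega = \Ad(g^{-1})\omega$ and $r_g^*\theta = g^{-1}\theta$; you get fiberwise injectivity from the observation that $\ker\varpi_{*(f,\widehat f)}$ is spanned by $\sigma(A)+\widehat\sigma(A)$ while $\omega$ recovers $A$ from such a vector; you match ranks; and you verify $\varpi_*\calD\subseteq D$ by pushing a $\calD$-horizontal curve down and checking the no-twist and no-slip conditions directly from the defining properties of $\omega$ and $\theta$. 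This is cleaner conceptually and closer in spirit to Corollary~\ref{stochasticroll}, which the paper states separately and proves in exactly the way you sketch for the inclusion step. What the paper's more computational route buys is the explicit local formula $\overline e_j = \varpi_*\sum_s f_{js}(X_s+\widehat X_s)$ and the dictionary $\varpi_*\Psi_{\alpha\beta}^r = -W_{\alpha\beta}^\ell$, $\varpi_*\widehat\Psi_{\alpha\beta}^r = W_{\alpha\beta}^r$, which are then immediately reused in the corollary $\varpi_*\calD^k = D^k$ and in the bracket computations of Lemma~\ref{backtoD}. If you are going to use your version of the proof, you should make explicit the rank count (the three sets of forms cut out independent conditions: $\ker\omega\cap\ker\widehat\omega$ has rank $2n$, and $\theta-\widehat\theta$ restricted to it is surjective onto $\real^n$), since your ``quick count'' silently assumes independence of the constraints.
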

\begin{proof}
From its definition, it is clear that $\{X_j + \widehat X_j\}_{j=1}^n$ is a basis for $\calD$.

Choose any pair of local section $(e, U)$ and $(\widehat e, \widehat U)$ of respectively $F(M)$ and $F(\widehat M)$. Give $F(M) \times F(\widehat M)|_{U \times \widehat U}$ local coordinates by associating the pair of frames
$(f, \widehat f)$ to the element
\begin{equation} \label{localtriv} \Big(m, \widehat m, \big(f_{ij}\big), \big(\widehat f_{ij}\big) \Big) \in U \times \widehat U \times \SO(n) \times \SO(n),\end{equation}
if $f_j = \sum_{i=1}^n f_{ij} e_i|_m$ and $\widehat f_{ij} = \sum_{i=1}^n \widehat f_{ij} \widehat e_i|_{\widehat m}$
holds.

Relative to this trivialization, we can define left and right vector field on each of the $\SO(n)$-factors.
On the first, define
\begin{equation} \label{leftrightvector} \Psi_{\alpha\beta}^\ell  = \sum_{s=1}^n \left(f_{s \alpha} \frac{\partial}{\partial f_{s\beta}} - f_{s\beta} \frac{\partial}{\partial f_{s\alpha}}\right), \quad
\Psi_{\alpha\beta}^r  = \sum_{s=1}^n \left(f_{\beta s} \frac{\partial}{\partial f_{\alpha s}} - f_{\alpha s} \frac{\partial}{\partial f_{\beta s}}\right).  \end{equation}
Notice that $\Psi_{\alpha\beta}^r =\sum_{l,s=1}^n f_{\alpha l} f_{\beta s} \Psi_{ls}^\ell.$
Remark also that $\Psi_{\alpha\beta}^\ell$ is just the restriction of $\sigma_{\alpha\beta}$ to $F(M) \times F(\widehat M)|_{U \times \widehat U}$, while $\Psi_{\alpha\beta}^r$ depends on the chosen local section $e$.
Define $\widehat \Psi_{\alpha\beta}^\ell$ and $\widehat \Psi_{\alpha\beta}^r$ analogously on the second $\SO(n)$-factor.

Restricted to $F(M) \times F(\widehat M)|_{U \times \widehat U}$ and in the local coordinates \eqref{localtriv}, the vector fields $X_j$ and $\widehat X_j$ can be written as
\begin{equation}\label{eq:localX}X_j = \sum_{s = 1}^n f_{sj} \Big(e_s - \sum_{\alpha < \beta} \Gamma_{s\beta}^\alpha \Psi_{\alpha\beta}^r\Big),
\qquad
\widehat X_j = \sum_{s=1}^n \widehat f_{sj} \Big(\widehat e_s - \sum_{\alpha < \beta} \widehat \Gamma_{s\beta}^\alpha \widehat \Psi_{\alpha\beta}^r\Big),\end{equation}
where $\Gamma_{i\beta}^\alpha = \left\langle e_\alpha, \nabla_{e_i} e_\beta \right\rangle$ and $\widehat \Gamma_{i\beta}^\alpha = \left\langle \widehat e_\alpha, \nabla_{\widehat e_i} \widehat e_\beta \right\rangle$.

We now turn to the image of $T(F(M) \times F(\widehat M)|_{U \times \widehat U})$ under $\varpi_*$.
Define $q_{ij}, \overline e_j, W_{\alpha\beta}^\ell$ and $W_{\alpha\beta}^r$ on $Q|_{U \times \widehat U}$ as in Section \ref{rollingdistribution}. Remark \ref{localrightremark} allows us to rewrite $\overline e_j$ on the form
$$ \overline e_j = e_j + q e_j + \sum_{\alpha < \beta} \Big(\Gamma_{j\beta}^\alpha W_{\alpha\beta}^\ell - \sum_{s=1}^n q_{sj} \widehat \Gamma_{s\beta}^\alpha W_{\alpha\beta}^r \Big)$$

Locally the mapping $\varpi$ can be described as
$$\varpi: \Big(m, \widehat m, (f_{ij}), (\widehat f_{ij}) \Big) \mapsto \left(m, \widehat m, \left(q_{ij} \right) \right),\quad q_{ij} = \sum_{s=1}^n \widehat f_{is} f_{js}.$$
and the action on the tangent vectors is given by formulas
$$\begin{array}{rrcl} \varpi_*: & \begin{array}{rcl} e_i & \mapsto & e_i \\ \widehat e_i & \mapsto & \widehat e_i \\ \Psi_{\alpha\beta}^r & \mapsto & - W_{\alpha\beta}^\ell \\ \widehat \Psi_{\alpha\beta}^r & \mapsto&  W_{\alpha\beta}^r \end{array} \end{array}. $$
From this it is clear that $\varpi_* \calD = D$ and that the map is injective of each fiber, since
$$\overline e_j = \varpi_* \sum_{s=1}^n f_{js}\left( X_s + \widehat X_s\right).$$
\end{proof}

From the form of the distribution $\calD$, we obtain the following interpretation of rolling without twisting or slipping.
\begin{corollary} \label{stochasticroll}
Let $q(t)$ be a rolling without twisting or slipping. Let $(f(t), \widehat f(t))$ be any lifting of $q(t)$ to a curve in $F(M) \times F(\widehat M)$ that is horizontal to $\calD$, and define $m(t)$ and $\widehat m(t)$ as the respective projections to $M$ and $\widehat M$. Then $(f(t), \widehat f(t))$ satisfy the following
\begin{itemize}
\item[(i)] (No twist condition) Every vector field $f_j(t)$ is parallel along $m(t)$.
Every vector field $\widehat f_j(t)$ is parallel along $\widehat m(t)$.
\item[(ii)] (No slip condition) For almost every $t$,
$$f^{-1}(t)\big(\dot m(t) \big) = \widehat f^{-1}(t)\big(\dot{\widehat m}(t)\big).$$
\end{itemize}

Furthermore, if $(f(t), \widehat f(t))$ is any absolutely continuous curve in $F(M) \times F(\widehat M)$, satisfying (i) and (ii), then $\varpi (f(t), \widehat f(t) )$ is a rolling without twisting or slipping.
\end{corollary}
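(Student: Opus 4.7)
The plan is to unpack the three kernel conditions defining $\calD = \ker \omega \cap \ker \widehat \omega \cap \ker(\theta - \widehat\theta)$ one at a time, matching each to a piece of (i) or (ii). A curve $(f(t), \widehat f(t))$ is horizontal to $\calD$ precisely when $(\dot f(t), \dot{\widehat f}(t)) \in \calD$ at each (almost every) $t$, so the three forms must annihilate the velocity separately.

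First I would treat the condition $\omega(\dot f(t)) = 0$. Since $\omega$ is the principal connection form of the Levi-Civita connection, the curve $f(t)$ is tangent to $\calE_\nabla$ exactly when each component $f_j(t)$ is parallel along $m(t) = \tau(f(t))$; this is the content of the definition of $\calE_\nabla$ given earlier. The same reasoning applied to $\widehat\omega(\dot{\widehat f}(t)) = 0$ gives the parallelism of $\widehat f_j(t)$ along $\widehat m(t)$, which together is exactly (i). Next, for the no-slip condition, I would compute $\theta(\dot f(t))$ componentwise: $\theta_j(\dot f(t)) = \langle f_j(t), \tau_* \dot f(t)\rangle = \langle f_j(t), \dot m(t) \rangle$. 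Because $f(t) \in \SO(\real^n, T_{m(t)} M)$ is an oriented orthonormal frame, these inner products are exactly the coordinates of $f^{-1}(t)(\dot m(t))$ in $\real^n$. Hence $\theta(\dot f(t)) = f^{-1}(t)(\dot m(t))$ and likewise $\widehat\theta(\dot{\widehat f}(t)) = \widehat f^{-1}(t)(\dot{\widehat m}(t))$, so $(\theta - \widehat\theta)(\dot f(t), \dot{\widehat f}(t)) = 0$ is precisely the identity in (ii). This covers the forward direction.

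For the converse, I would invoke Theorem \ref{mathcalDtoD}. If $(f(t), \widehat f(t))$ is absolutely continuous and satisfies (i) and (ii), then reversing the three unpackings above shows $(\dot f(t), \dot{\widehat f}(t)) \in \calD_{(f(t), \widehat f(t))}$ almost everywhere. Applying $\varpi_*$ and using $\varpi_* \calD = D$ from Theorem \ref{mathcalDtoD}, we conclude that $\tfrac{d}{dt}\varpi(f(t), \widehat f(t)) \in D_{\varpi(f(t), \widehat f(t))}$ almost everywhere, so $\varpi(f(t), \widehat f(t))$ is a rolling. There is no genuine obstacle: the only point requiring care is the identification $\theta(\dot f) = f^{-1}(\dot m)$, which relies on $f$ being an isometry from $\real^n$ to $T_m M$; after that the proof is a direct translation of the three kernel conditions into the language of Definition \ref{defrolling}.
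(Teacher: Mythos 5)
Your proof is correct and follows the same route as the paper: unpack the three kernel conditions $\ker\omega$, $\ker\widehat\omega$, and $\ker(\theta-\widehat\theta)$ into parallelism of the frame fields and the no-slip identity, with the converse following by reversing the unpacking and applying $\varpi_*\calD = D$. The paper's own proof is a terse two-sentence version of exactly this; you merely fill in the details (notably the identification $\theta(\dot f) = f^{-1}(\dot m)$) that it leaves implicit.
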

\begin{proof} (i) follows from the definition of the principal Ehresmann connections $\ker \omega$ and $\ker \widehat \omega$. (ii) is exactly the requirement for a curve to be in $\ker(\theta - \widehat \theta)$.
\end{proof}

The main advantage of the viewpoint given in Theorem \ref{mathcalDtoD}, is that it will help us to compute $\Lie_q D$.
\begin{corollary}
$\varpi_* \calD^k_q = D^k_q$ for any $q \in Q, k \in \mathbb{N}.$
\end{corollary}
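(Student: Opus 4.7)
My plan is to argue by induction on $k$, with the base case $k = 1$ being exactly the content of Theorem \ref{mathcalDtoD}. The inductive step reduces to a single structural fact: around any point $p \in F(M) \times F(\widehat M)$, I want to exhibit a local basis of $\calD$ whose elements are $\varpi$-related to the basis $\overline e_j$ of $D$ given in \eqref{basisD}. Once this is established, the result follows by combining the naturality of Lie brackets under $\varpi$-relatedness with the standard fact that, for a distribution $B$ locally spanned by $Y_1,\dots,Y_n$, the subspace $B^k_r$ equals the span of iterated brackets $[Y_{i_1},[Y_{i_2},\dots,Y_{i_l}]\dots]|_r$ of length $l\le k$.

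The key local construction is the vector field
$$\widetilde{\overline e}_j := \sum_{s=1}^n f_{js}\left(X_s + \widehat X_s\right)$$
on $F(M)\times F(\widehat M)|_{U\times \widehat U}$, built from the coordinates \eqref{localtriv} associated to sections $(e,U)$ and $(\widehat e,\widehat U)$. As a $C^\infty$-linear combination of the global sections $X_s + \widehat X_s$ of $\calD$, the vector field $\widetilde{\overline e}_j$ is itself a section of $\calD$, and the identity displayed at the end of the proof of Theorem \ref{mathcalDtoD} gives $\varpi_* \widetilde{\overline e}_j|_{(f,\widehat f)} = \overline e_j|_{\varpi(f,\widehat f)}$. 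Since $\varpi_*|_{\calD}$ is fiberwise an isomorphism and $\{\overline e_j\}$ is a basis of $D$, the $\{\widetilde{\overline e}_j\}$ form a local basis of $\calD$.

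With this in hand, for any $p \in \varpi^{-1}(q)$ the subspace $\calD^k_p$ is spanned by the values at $p$ of iterated brackets of the $\widetilde{\overline e}_j$ of length at most $k$. Their images under $\varpi_*$ are exactly the corresponding iterated brackets of $\overline e_j$, whose values at $q$ span $D^k_q$. This yields $\varpi_* \calD^k_p = D^k_q$, completing the induction.

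The main point that will need care is verifying that $\widetilde{\overline e}_j$ is genuinely $\varpi$-projectable — i.e., $\widetilde{\overline e}_j|_{(fg,\widehat f g)} = (r_g)_*\widetilde{\overline e}_j|_{(f,\widehat f)}$ for every $g$ in the diagonal $\SO(n)$ — and not merely that its pushforward at some preimage of $q$ equals $\overline e_j|_q$. This should follow cleanly from the $\SO(n)$-equivariance of horizontal lifts, $(r_g)_*h_f v = h_{fg}v$, the transformation rule $(fg)_{js} = \sum_t f_{jt}g_{ts}$, and the orthogonality $gg^T = I$; after this short computation, nothing else in the argument is more than bookkeeping.
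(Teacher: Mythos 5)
Your proposal is correct and takes essentially the same route as the paper: both use the local frame $\widetilde{\overline e}_j = \sum_{s=1}^n f_{js}(X_s + \widehat X_s)$ of $\calD$, the identity $\varpi_* \widetilde{\overline e}_j = \overline e_j$ from the proof of Theorem~\ref{mathcalDtoD}, naturality of Lie brackets under $\varpi$-relatedness, and induction on $k$. You are a bit more explicit than the paper about verifying $\varpi$-projectability of $\widetilde{\overline e}_j$ (a check the paper leaves implicit), and that verification goes through exactly as you sketch via $(r_g)_* h_f v = h_{fg}v$, $(fg)_{js} = \sum_t f_{jt} g_{ts}$, and $g g^T = I$.
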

\begin{proof}
We only need to show this locally. Introduce local coordinates as in the proof of Theorem \ref{mathcalDtoD} and let $\overline e_j$ be as in \eqref{basisD}. Then
\begin{align*}
\left[\overline e_i, \overline e_j\right] & = \varpi_* \left[\sum_{s = 1}^n f_{is} \left(X_s + \widehat X_s \right), \sum_{s = 1}^n f_{js} \left(X_s + \widehat X_s \right)\right],
\end{align*}
and since $\sum_{s = 1}^n f_{is} \left(X_s + \widehat X_s \right)$ is a local basis for $\calD$, it follows that $D^2 = \varpi_* \calD^2$ locally. The rest follows by induction.
\end{proof}
Since $[X_i, \widehat X_j] = 0$, computations of brackets of $\calD$, and hence of brackets $D$, can be reduced to mostly computing brackets of sections in the Ehresmann connections corresponding to the Levi-Civita connections of the manifolds involved.

\subsection{Remark on previous descriptions of rolling using frame bundles}
The description of rolling given in Theorem \ref{mathcalDtoD}, looks very similar to the definition of rolling without twisting or slipping found in \cite{BH} for dimension 2. Here, the description of a rolling is in terms of the distribution $\widetilde \calD := \calD \oplus \ker \varpi_*$, which can also be described as
\begin{equation} \label{BryantHsu} \widetilde \calD = \ker (\omega - \widehat \omega) \cap \ker(\theta - \widehat \theta).\end{equation}
In \cite{BH}, a rolling of a pair of 2-dimensional manifold is defined as a curve in $Q$, that is horizontal to $\varpi_* \widetilde \calD$, where $\widetilde \calD$ is defined in terms of \eqref{BryantHsu}.

We could have used $\widetilde \calD$ for our computation, since clearly $\varpi_* \widetilde \calD^k_q = D^k_q$ also,
and $D$ is bracket generating at a point $q \in Q$ if and only if $\widetilde \calD$ is bracket generating at any (and hence every) $(f,\widehat f) \in \varpi^{-1}(q)$. However, since $[\calD^k, \ker \varpi_*] \subset \calD^k + \ker \varpi_*,$ the additional brackets are not of any interest.

The definition of rolling or ``rolling without slipping" in probability theory is defined on frame bundles \cite{Hsu}, and is equivalent to considering curves in $F(M) \times F(\widehat M)$ that are horizontal to $\calD$ for the special case when $\widehat M$ is $\real^n$ with the Euclidean metric and standard orientation.

\section{Brackets of $\calD$}
\subsection{Tensors on $M$ and associated vector fields}
We introduce a general type of vector fields associated to tensors. By giving a general formula for the brackets of these, we essentially determine all the brackets of $\calD$. Let $T$ be a tensor on $M$. We will only consider tensors defined on vectors. To any tensor $k$-tensor on $M$, we can associate the functions
$$\begin{array}{rccc} \scrE_{i_1, \dots i_k}(T): & F(M) & \mapsto & \real \\
& f & \mapsto & T(f_{i_1}, \dots, f_{i_k}) \end{array}, \qquad 1 \leq i_s \leq n.$$
If $k = 2 +l$, and $T$ is antisymmetric in the first two arguments, we can define vector fields on $F(M)$ by
$$\scrW_{i_1,\dots, i_l}(T) = \sum_{1 \leq \alpha < \beta \leq n} \scrE_{\alpha,\beta,i_1, \dots, i_l}(T) \sigma_{\alpha\beta},$$

If $\widehat T$ is a tensor on $\widehat M$, we define $\scrE_{i_1, \dots i_k}(\widehat T)$ and $\scrW_{i_1,\dots, i_l}(\widehat T)$ similarly as respectively functions and vector fields on $F(\widehat M)$.

\begin{lemma} \label{WErelations}
\begin{itemize} Let $X_k$ be defined as in \eqref{basisEhresmann}.
\item[(a)] For any $l$-tensor $T$,
$$X_k \scrE_{i_1,\dots, i_l}(T) = \scrE_{i_1,\dots, i_l,k}(\nabla T).$$
\item[(b)] For any $2+l$-tensor $T$, that is antisymmetric in the first two arguments
$$\left[ X_k, \scrW_{i_1,\dots, i_l}(T) \right] = \scrW_{i_1,\dots, i_l,k}(\nabla T)
- \sum_{s=1}^n \scrE_{s, k, i_1, \dots, i_l}(T) X_s.$$
\end{itemize}
\end{lemma}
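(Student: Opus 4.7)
The plan is to establish (a) by direct differentiation along a horizontal curve, and then derive (b) from (a) together with the auxiliary identity $[X_k,\sigma_{\alpha\beta}] = \delta_{k\alpha}X_\beta - \delta_{k\beta}X_\alpha$.

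For (a) I would use the defining property $X_k|_f = h_f f_k$. Let $f(t)$ be the integral curve of $X_k$ starting at $f$, and set $m(t) = \tau(f(t))$. By the construction of $\calE_\nabla$ recalled in Section 2, each vector field $f_j(t)$ is parallel along $m(t)$, and $\dot m(0) = f_k$. Differentiating yields
\begin{equation*}
X_k|_f \scrE_{i_1,\ldots,i_l}(T) = \left.\frac{d}{dt}\right|_{t=0} T|_{m(t)}(f_{i_1}(t),\ldots,f_{i_l}(t)) = (\nabla_{f_k}T)(f_{i_1},\ldots,f_{i_l}),
\end{equation*}
since the parallel-transport terms $\nabla_{\dot m}f_{i_s}$ vanish. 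With the convention that the last argument of $\nabla T$ is the differentiation direction, the right-hand side equals $\scrE_{i_1,\ldots,i_l,k}(\nabla T)(f)$.

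For (b), I would expand by Leibniz,
\begin{equation*}
[X_k, \scrW_{i_1,\ldots,i_l}(T)] = \sum_{\alpha<\beta}\bigl(X_k\scrE_{\alpha,\beta,i_1,\ldots,i_l}(T)\bigr)\sigma_{\alpha\beta} + \sum_{\alpha<\beta}\scrE_{\alpha,\beta,i_1,\ldots,i_l}(T)[X_k,\sigma_{\alpha\beta}],
\end{equation*}
so part (a) recognizes the first sum as $\scrW_{i_1,\ldots,i_l,k}(\nabla T)$. The bracket $[X_k,\sigma_{\alpha\beta}]$ I would compute from the $\SO(n)$-equivariance $r_{g*} h_f v = h_{fg} v$ of the horizontal distribution: writing the flow of $\sigma_{\alpha\beta}$ as $\phi_t(f) = f\cdot\exp(tw_{\alpha\beta})$, equivariance gives $(\phi_{-t})_* X_k|_{\phi_t(f)} = h_f\bigl(f(\exp(tw_{\alpha\beta})e_k)\bigr)$, whose derivative at $t=0$ is $h_f(f(w_{\alpha\beta}e_k)) = \sum_j (w_{\alpha\beta})_{jk} X_j|_f$. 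Expanding $(w_{\alpha\beta})_{jk} = \delta_{j\alpha}\delta_{k\beta} - \delta_{j\beta}\delta_{k\alpha}$ yields the asserted formula.

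Substituting and splitting the sum $\sum_{\alpha<\beta}$ into the pieces where $\alpha=k$ and where $\beta=k$, then using the antisymmetry of $T$ in the first two slots to flip $\scrE_{\alpha,k,\ldots}(T) = -\scrE_{k,\alpha,\ldots}(T)$, collapses the restricted sum into an unrestricted one:
\begin{equation*}
\sum_{\alpha<\beta}\scrE_{\alpha,\beta,i_1,\ldots,i_l}(T)[X_k,\sigma_{\alpha\beta}] = \sum_{s=1}^n \scrE_{k,s,i_1,\ldots,i_l}(T)X_s = -\sum_{s=1}^n \scrE_{s,k,i_1,\ldots,i_l}(T)X_s,
\end{equation*}
which matches the stated formula. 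The main obstacle I anticipate is bookkeeping: tracking the sign in $[X_k,\sigma_{\alpha\beta}]$ (which depends on the order of the Lie bracket and on the sign convention for $w_{\alpha\beta}$), and then correctly folding the restricted sum into an unrestricted one via antisymmetry. No deeper idea is required beyond carefully untangling these conventions.
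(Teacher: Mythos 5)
Your proof is correct but takes a genuinely different — and cleaner — route than the paper's. For part (a), the paper works in an explicit local trivialization, writing $X_k = f_k - \sum_{\alpha<\beta}\langle f_\alpha,\nabla_{f_k}f_\beta\rangle\sigma_{\alpha\beta}$ and computing the action on $\scrE_{i_1,\dots,i_l}(T)$ term by term; you instead differentiate along the horizontal integral curve of $X_k$, where the parallelism of the $f_{i_s}(t)$ kills all correction terms at once via the Leibniz rule for covariant differentiation. The contrast is sharper for (b): the paper's proof is a lengthy brute-force expansion involving the commutators $[\sigma_{\alpha\beta},\sigma_{\kappa\lambda}]$ with several lines of intermediate cancellations, whereas you decompose by Leibniz, feed part (a) into the derivative of the coefficient functions, and isolate the remaining content in the standard frame-bundle identity $[X_k,\sigma_{\alpha\beta}] = \delta_{k\alpha}X_\beta - \delta_{k\beta}X_\alpha$ (equivalently $[\sigma(A),B(\xi)]=B(A\xi)$), obtained from $\SO(n)$-equivariance of the horizontal distribution. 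This buys modularity: the auxiliary bracket is a known structural fact and the tensor bookkeeping collapses to a single antisymmetry flip. One small precision point worth fixing: with $\phi_t$ the flow of $\sigma_{\alpha\beta}$, the quantity $\left.\tfrac{d}{dt}\right|_{t=0}(\phi_{-t})_*X_k|_{\phi_t(f)}$ is $\mathcal{L}_{\sigma_{\alpha\beta}}X_k = [\sigma_{\alpha\beta},X_k]$, not $[X_k,\sigma_{\alpha\beta}]$; the derivative you wrote, $\sum_j(w_{\alpha\beta})_{jk}X_j = \delta_{k\beta}X_\alpha - \delta_{k\alpha}X_\beta$, is indeed $[\sigma_{\alpha\beta},X_k]$, and the asserted formula for $[X_k,\sigma_{\alpha\beta}]$ follows by negation. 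You flagged exactly this sign bookkeeping as the hazard, and both the stated auxiliary formula and the final collapse to $-\sum_{s=1}^n\scrE_{s,k,i_1,\dots,i_l}(T)X_s$ are correct.
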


\begin{proof}
\begin{itemize}
\item[(a)] We use a local representation of $X_j$. Consider the formula given in \eqref{eq:localX}, and write $\sum_{s=1}^n f_{sj} e_s$ as just $f_j$. Since $\Psi^\ell_{\alpha\beta}$ is just the representation of $\sigma_{\alpha\beta}$ in local coordinates, we can write $X_j$ as $$X_j = f_j - \sum_{1 \leq \alpha \leq n} \langle f_{\alpha}, \nabla_{f_j} f_\beta \rangle \sigma_{\alpha\beta}.$$ In this notation, remark first that $\sigma_{\alpha\beta} f_i = \delta_{\beta,i} f_{\alpha} - \delta_{\alpha,i} f_{\beta},$
which gives us
\begin{align*} \sum_{1 \leq \alpha < \beta \leq n} \left\langle f_\alpha, \nabla_{f_k} f_\beta \right\rangle \sigma_{\alpha\beta} f_{i_j} = & \frac12 \sum_{\alpha, \beta = 1}^n \left\langle f_\alpha, \nabla_{f_k} f_\beta \right\rangle (\delta_{\beta,i_j} f_{\alpha} - \delta_{\alpha,i_j} f_{\beta}) \\
= & \sum_{\alpha = 1}^n \left\langle f_\alpha, \nabla_{f_k} f_{i_j} \right\rangle f_\alpha 
=  \nabla_{f_k} f_{i_j}.
\end{align*}
Then the result follows from realizing that
\begin{align*} X_k \scrE_{i_1,\dots, i_l}(T) & =  f_k T(f_{i_1}, \dots, f_{i_l}) - \sum_{1 \leq \alpha < \beta \leq n}  \left\langle f_\alpha, \nabla_{f_k} f_\beta \right\rangle \sigma_{\alpha\beta} T(f_{i_1}, \dots, f_{i_l}) \\
& =  f_k T(f_{i_1}, \dots, f_{i_l}) - T(\nabla_{f_k} f_{i_1}, \dots, f_{i_l}) - T(f_{i_1}, \nabla_{f_k} f_{i_2}, \dots, f_{i_l}) \\ & \quad - \cdots - T(f_{i_1}, f_{i_2}, \dots, \nabla_{f_k} f_{i_l}) \\
& =  \nabla T(f_{i_l}, \dots, f_{i_l}, f_k).\end{align*}

\item[(b)]
The brackets $[\sigma_{\alpha\beta} , \sigma_{\kappa\lambda}]$ are, by definition, given by the same relations as described in \eqref{bracketson}. We will continue by the following computations.
\begin{align*} & [X_k, \scrW_{i_1,\dots, i_l}(T)]
\\ = & \, \frac{1}{2} \sum_{\kappa, \lambda =1}^n X_k\left( \scrE_{\kappa, \lambda, i_1, \dots, i_l}(T) \right) \sigma_{\kappa\lambda} - \frac{1}{4} \sum_{\alpha, \beta, \kappa, \lambda =1}^n \left\langle f_\alpha, \nabla_{f_k} f_\beta \right\rangle \scrE_{\kappa,\lambda,i_1,\dots,i_l}(T) \left[\sigma_{\alpha\beta} , \sigma_{\kappa\lambda} \right] \\
& - \frac{1}{2} \sum_{\kappa, \lambda =1}^n \scrE_{\kappa,\lambda,i_1,\dots,i_l}(T) \sigma_{\kappa\lambda} f_k 
+ \frac{1}{4} \sum_{\alpha, \beta, \kappa, \lambda =1}^n  \scrE_{\kappa,\lambda,i_1,\dots,i_l}(T) \sigma_{\kappa\lambda} \left(\left\langle f_\alpha, \nabla_{f_k} f_\beta \right\rangle \right) \sigma_{\alpha\beta} \end{align*}
\begin{align*}= & \,  \frac{1}{2} \sum_{\kappa, \lambda =1}^n \scrE_{\kappa, \lambda, i_1, \dots, i_l}(\nabla T) \sigma_{\kappa\lambda}\\
& + \frac{1}{2} \sum_{\kappa, \lambda =1}^n \left(T(\nabla_{f_k} f_\kappa, f_\lambda, f_{i_1}, \dots, f_{i_l}) + T(f_\kappa, \nabla_{f_k} f_\lambda, f_{i_1}, \dots, f_{i_l}) \right)\sigma_{\kappa\lambda} \\
& - \sum_{s =1}^n \scrE_{s,k,i_1,\dots,i_l}(T) f_s + \frac{1}{2} \sum_{\alpha, \beta, s =1}^n  \scrE_{s,\alpha,i_1,\dots,i_l}(T) \left(\left\langle f_s, \nabla_{f_k} f_\beta \right\rangle \right) \sigma_{\alpha\beta}\\
& + \frac{1}{2} \sum_{\alpha, \beta, s =1}^n  \scrE_{s,k,i_1,\dots,i_l}(T) \left(\left\langle f_\alpha, \nabla_{f_s} f_\beta \right\rangle \right) \sigma_{\alpha\beta}\\
& + \frac{1}{2} \sum_{\alpha, \beta, s =1}^n  \scrE_{s,\beta,i_1,\dots,i_l}(T) \sigma_{w_{\kappa\lambda}} \left(\left\langle f_\alpha, \nabla_{f_k} f_s \right\rangle \right) \sigma_{\alpha\beta}\\
= & \, \scrW_{i_1,\dots, i_l,k}(\nabla T)
- \sum_{s=1}^n \scrE_{s, k, i_1, \dots, i_l}(T) X_s
\end{align*}
\end{itemize} 
\end{proof}

The next lemma gives an explanation for the introduction of the above notation.
\begin{lemma} \label{Xbracketlemma}
$[X_i,X_j] = -\scrW_{ij}(R) \text{ for } i,j = 1 \dots n.$
\end{lemma}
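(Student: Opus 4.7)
The plan is to use Cartan's formula $d\alpha(V,W) = V(\alpha(W)) - W(\alpha(V)) - \alpha([V,W])$ for a 1-form $\alpha$, applied to the components $\theta_k$ and $\omega_{\alpha\beta}$, together with the two structure equations \eqref{Cartanequations}. The horizontal vector fields $X_1,\dots,X_n$ are constructed precisely so that $\theta_k(X_j) = \delta_{kj}$ is constant and $\omega_{ij}(X_k) = 0$. Consequently, whenever one applies Cartan's formula with $(V,W) = (X_i,X_j)$ to any component of $\theta$ or $\omega$, the first two terms vanish; moreover, all wedge products of $\omega$-components on horizontal arguments also vanish, so the structure equations become exceptionally simple on the pair $(X_i,X_j)$.

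First I would show that $[X_i,X_j]$ is vertical. The first Cartan equation gives $d\theta_k(X_i,X_j) = -\sum_\ell (\omega_{k\ell}\wedge\theta_\ell)(X_i,X_j) = 0$, and combined with Cartan's formula this forces $\theta_k([X_i,X_j]) = 0$ for every $k$. Since $\ker\theta = \ker\tau_*$, this means $[X_i,X_j]|_f$ lies in the vertical space, so $[X_i,X_j]|_f = \sigma(A_f)|_f$ for a unique $A_f \in \so(n)$.

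Next I would identify $A_f$ through its matrix entries $(A_f)_{\alpha\beta} = \omega_{\alpha\beta}([X_i,X_j])|_f$. By the second structure equation, $d\omega_{\alpha\beta}(X_i,X_j) = \Omega_{\alpha\beta}(X_i,X_j) = R(f_i,f_j,f_\beta,f_\alpha)$, and Cartan's formula yields
$$\omega_{\alpha\beta}([X_i,X_j]) = -R(f_i,f_j,f_\beta,f_\alpha).$$
The antisymmetry in each pair and the pair-swap symmetry of $R$ rewrite this in terms of $\scrE_{\alpha,\beta,i,j}(R) = R(f_\alpha,f_\beta,f_i,f_j)$. To reassemble, note that $\omega_{\alpha\beta}(\sigma_{\kappa\lambda}) = \delta_{\alpha\kappa}\delta_{\beta\lambda} - \delta_{\alpha\lambda}\delta_{\beta\kappa}$, so an antisymmetric matrix $A$ decomposes as $A = \sum_{\alpha<\beta} A_{\alpha\beta}\, w_{\alpha\beta}$; together with the linearity of $\sigma$, this gives
$$[X_i,X_j]|_f = \sum_{\alpha<\beta} \omega_{\alpha\beta}([X_i,X_j])\, \sigma_{\alpha\beta}|_f,$$
which is exactly $-\scrW_{ij}(R)|_f$ after substituting the previous display and applying the curvature symmetries.

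The main obstacle is purely sign bookkeeping: one has to combine (a) the minus sign from Cartan's formula for $d$ on a 1-form, (b) the unconventional ordering of the last two arguments in the author's definition $\Omega_{ij}(v_1,v_2) = R(\tau_* v_1,\tau_* v_2,f_j,f_i)$, and (c) the sign flips from the standard Riemann symmetries used to bring the expression into the form $\scrE_{\alpha,\beta,i,j}(R)$. Everything else is automatic once one observes that horizontality kills all the $\omega\wedge\omega$ and $\omega\wedge\theta$ correction terms.
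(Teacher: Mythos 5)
Your argument is the paper's argument: since $\ker\theta\cap\ker\omega$ is trivial, it suffices to compare $\theta([X_i,X_j])$ and $\omega([X_i,X_j])$ with the corresponding evaluations of the right-hand side, and Cartan's coordinate-free formula for $d$ together with the two structure equations does this in two lines, because horizontality of the $X_k$ annihilates the $\omega\wedge\theta$ and $\omega\wedge\omega$ terms. You make explicit a couple of steps the paper leaves implicit (establishing verticality of $[X_i,X_j]$ first, then reconstructing the vertical vector from its matrix entries via the $w_{\alpha\beta}$-decomposition of a skew matrix), but the underlying computation is the same one.

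One caveat on the ``sign bookkeeping'' you identify as the main obstacle and then gesture past: on a literal reading of the paper's conventions it does not obviously yield the lemma's minus sign. From your own display, $\omega_{\alpha\beta}([X_i,X_j]) = -R(f_i,f_j,f_\beta,f_\alpha)$; applying the pair-swap symmetry and then one antisymmetry gives $-R(f_i,f_j,f_\beta,f_\alpha) = -R(f_\beta,f_\alpha,f_i,f_j) = +R(f_\alpha,f_\beta,f_i,f_j) = +\scrE_{\alpha,\beta,i,j}(R)$, so the minus from Cartan's formula cancels against the minus hidden in the argument ordering of $\Omega_{ij}$, and one lands on $+\scrW_{ij}(R)$ rather than $-\scrW_{ij}(R)$. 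The same tension appears in the paper's own proof, where $-d\omega(X_i,X_j)$ is silently replaced by $+\Omega(X_i,X_j)$ in the intermediate step, so this is a latent inconsistency in the source's conventions rather than a flaw in your reasoning; but since you deferred this bookkeeping and asserted it works out, you should be aware that carrying it out carefully under the stated definitions of $\Omega$, $R$, and the structure equations does not straightforwardly deliver the stated sign.
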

\begin{proof}
This lemma is an easy consequence of the Cartan equations. Since $\ker \theta \cap \ker \omega$ only contains the zero section of $TF(M)$, we can show equality in the above equation by evaluating the left and right hand side by $\theta$ and $\omega$ and see that it produces the same result. Evaluating the left hand side, we get
\begin{align*}\theta([X_i,X_j]) &= - d\theta(X_i, X_j) = 0,\end{align*}
\begin{align*}\omega([X_i,X_j]) &= - d\omega(X_i, X_j) = \sum _{1 \leq \alpha < \beta \leq n} \Omega(X_i, X_j)
\\& = -\sum _{1 \leq \alpha < \beta \leq n} R(f_\alpha, f_\beta, f_i, f_j) w_{\alpha,\beta}. \end{align*}
which is obviously what we get from evaluating the right hand side.
\end{proof}

\begin{remark} \label{rmkEhresmann}
Combining these two lemmas, we get a way to express the commutators of the Ehresmann connection. Let $\calE = \ker \omega$ be the Ehresmann connection corresponding to the Levi-Civita connection. Then
$$\calE^{k+2} = \calE^{k+1} + \spn \left\{\scrW_{i_1,\dots,i_{k+2}}(\nabla^{k}R) \right\}_{i_1, \dots, i_{k+2} =1}^n,$$
for $k \geq 0.$ As a consequence of this, we get the well know Ambrose-Singer theorem, see \cite{AmSi53},\cite[App C]{Mon02}, that the subalgebra spanned by elements $\omega|_f\left(\scrW_{i_1,\dots,i_{k+2}}(\nabla^{k}R)\right)$ is contained in the homology algebra at $f$.
\end{remark}

We adopt the convention that if the elements in the collection are vector fields, ``span'' means the span over smooth functions (so in Remark \ref{rmkEhresmann}, it means over $C^\infty(F(M))$). If the elements are vectors, the span is over the real numbers.

\subsection{Obtaining the brackets for $\calD$}
Computing the brackets on $\calD$ is a bit more complicated than each individual Ehresmann connection, since it is harder to know whether or not two vectors fields are equal $\mathrm{mod } \spn \{X_j + \widehat X_j\}_{j=1}^n$ rather than just $\mathrm{mod } \spn \{X_j \}_{j=1}^n$.
We illustrate this by computing the two next brackets.

\begin{lemma} \label{XXXbracket}
\begin{itemize}
\item[(a)] $[X_k, [X_i, X_j]] = - \scrW_{ijk}(\nabla R) + \sum_{s=1}^n \scrE_{skij}(R) X_s.$
\item[(b)] Let $R^2$ be the 6-tensor on $M$ defined by
$$R^2(Y_\alpha, Y_\beta, Y_{i_1}, Y_{i_2}, Y_{i_3}, Y_{i_4}) = R(R(Y_\alpha, Y_\beta) Y_{i_1}, Y_{i_2},  Y_{i_3}, Y_{i_4}).$$
Then
\begin{align*} [X_l, [X_k, [X_i,X_j]] =& -\scrW_{ijkl}(\nabla^2 R) + \scrW_{lkij}(R^2)
\\ &+ \sum_{s=1}^n \left(\scrE_{ijslk}(\nabla R) - \scrE_{ijskl}(\nabla R)\right) X_s. \end{align*}
\end{itemize}
\end{lemma}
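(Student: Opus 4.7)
The plan is to bootstrap: part (a) follows from one application of each of the earlier lemmas, and part (b) follows from applying them again to the formula produced by part (a). First, starting from Lemma \ref{Xbracketlemma}, I have $[X_i,X_j] = -\scrW_{ij}(R)$, so
$[X_k,[X_i,X_j]] = -[X_k,\scrW_{ij}(R)]$.
I then invoke Lemma \ref{WErelations}(b) with $T=R$ (a $4$-tensor, antisymmetric in the first two slots) and free indices $i_1=i$, $i_2=j$. Substituting its right-hand side, $\scrW_{ijk}(\nabla R) - \sum_s \scrE_{s,k,i,j}(R) X_s$, and flipping the overall sign gives part (a) on the nose.

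For part (b), I bracket the identity from (a) with $X_l$ and distribute:
$[X_l,[X_k,[X_i,X_j]]] = -[X_l,\scrW_{ijk}(\nabla R)] + \sum_s [X_l,\scrE_{s,k,i,j}(R)\,X_s].$
The first summand is handled by a second use of Lemma \ref{WErelations}(b), now with $T = \nabla R$ (a $5$-tensor still antisymmetric in the first two slots) and free indices $(i,j,k)$; this yields the leading term $-\scrW_{ijkl}(\nabla^2 R)$ plus a horizontal correction of the form $\sum_s \scrE_{s,l,i,j,k}(\nabla R) X_s$. For the second summand I apply the Leibniz rule $[X_l, fX_s] = (X_l f)X_s + f[X_l,X_s]$, then use Lemma \ref{WErelations}(a) to compute $X_l(\scrE_{s,k,i,j}(R)) = \scrE_{s,k,i,j,l}(\nabla R)$ and Lemma \ref{Xbracketlemma} to compute $[X_l,X_s] = -\scrW_{ls}(R)$. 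This produces a further horizontal term $\sum_s \scrE_{s,k,i,j,l}(\nabla R) X_s$ and a purely vertical term $-\sum_s \scrE_{s,k,i,j}(R)\,\scrW_{ls}(R)$.

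The crux is to identify this vertical term with $\pm\scrW_{lkij}(R^2)$. This is a direct calculation: expanding $R(f_\alpha,f_\beta)f_l$ in the frame as $\sum_s R(f_\alpha,f_\beta,f_l,f_s) f_s$, one gets
$R^2(f_\alpha,f_\beta,f_l,f_k,f_i,f_j) = \langle R(R(f_\alpha,f_\beta)f_l,f_k)f_i,f_j\rangle = \sum_s R(f_\alpha,f_\beta,f_l,f_s)\,R(f_s,f_k,f_i,f_j),$
so that $\sum_s \scrE_{s,k,i,j}(R)\,\scrW_{ls}(R) = \scrW_{lkij}(R^2)$ after collecting the $\sigma_{\alpha\beta}$-coefficients. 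Substituting back assembles the four pieces: a $\nabla^2 R$-term, an $R^2$-term, and two $\nabla R$-contributions times $X_s$.

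The main obstacle is purely bookkeeping: the $\scrE$- and $\scrW$-symbols that come out of the two lemmas have index orders like $(s,k,i,j,l)$, $(s,l,i,j,k)$, $(l,k,i,j)$, whereas the statement collects them as $\scrE_{ijslk}, \scrE_{ijskl}, \scrW_{lkij}$. Reordering is done by the pair symmetry $R(Y_1,Y_2,Y_3,Y_4)=R(Y_3,Y_4,Y_1,Y_2)$ together with its inheritance by $\nabla R$ (the covariant-derivative slot is inert). Wherever a combination of $\scrE(\nabla R)$ terms has to be turned into the stated difference $\scrE_{ijslk}(\nabla R)-\scrE_{ijskl}(\nabla R)$, the second Bianchi identity $(\nabla_{f_k} R)(f_i,f_j,\cdot,\cdot)+(\nabla_{f_i} R)(f_j,f_k,\cdot,\cdot)+(\nabla_{f_j} R)(f_k,f_i,\cdot,\cdot)=0$ provides the needed relation, so the sign-tracking is where I expect to have to be most careful.
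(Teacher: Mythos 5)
Your route is the same as the paper's: derive (a) from Lemma \ref{Xbracketlemma} and Lemma \ref{WErelations}(b), then bracket once more with $X_l$ and reuse both lemmas. Part (a) checks out exactly, and in part (b) the four pieces you produce are the right ones. Where you need to be careful is precisely where you hedged: the signs.

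Your Leibniz step gives the vertical piece $-\sum_s\scrE_{skij}(R)\,\scrW_{ls}(R)$, and your identification $\sum_s\scrE_{skij}(R)\,\scrW_{ls}(R)=\scrW_{lkij}(R^2)$ is correct. Combined, the vertical contribution you actually derive is $-\scrW_{lkij}(R^2)$, not $+\scrW_{lkij}(R^2)$ as in the statement (the paper's own proof writes the last term of the intermediate expansion with a $+$ in front of $\sum_s\scrE_{skij}(R)\scrW_{ls}(R)$, which does not match the Leibniz computation either). Likewise, your two horizontal $\nabla R$ contributions are $\sum_s\scrE_{slijk}(\nabla R)X_s$ and $\sum_s\scrE_{skijl}(\nabla R)X_s$; by pair symmetry of $\nabla R$ these are $\scrE_{ijslk}(\nabla R)$ and $\scrE_{ijskl}(\nabla R)$ respectively, so you get $\sum_s\bigl(\scrE_{ijslk}(\nabla R)+\scrE_{ijskl}(\nabla R)\bigr)X_s$, a \emph{sum}, not the printed \emph{difference}. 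Your appeal to the second Bianchi identity to manufacture that difference does not work: Bianchi cyclically permutes the two antisymmetric slots with the differentiation slot, and the two terms you have are not related by such a permutation, so no sign can be flipped that way. In fact the paper's proof is internally inconsistent on exactly this point: its intermediate line has $+\sum_s\scrE_{skijl}(\nabla R)X_s$, which by pair symmetry should become $+\scrE_{ijskl}(\nabla R)$ in the final line, yet the final line prints a minus. So trust your derivation, present the signs you actually obtained, and flag the discrepancy rather than papering over it with an inapplicable identity.
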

The reason for the notation $R^2$ will be clearer in Section \ref{seclocsym}.

\begin{proof}
Statement (a) follows directly from Lemma \ref{WErelations}. By Lemma \ref{WErelations} we also have
\begin{align*} & [X_l, [X_k, [X_i,X_j]]] \\
=& - \scrW_{ijkl}(\nabla^2 R) + \sum_{s=1}^n \scrE_{slijk}(\nabla R) X_s  + \sum_{s=1}^n \scrE_{skijl}(\nabla R) X_s  + \sum_{s=1}^n \scrE_{skij}(R) \scrW_{ls}(R) \\
=& -\scrW_{ijkl}(\nabla^2 R) + \scrW_{lkij}(R^2)+ \sum_{s=1}^n \left(\scrE_{ijslk}(\nabla R) - \scrE_{ijskl}(\nabla R)\right) X_s.
\end{align*}
\end{proof}
We can continue this procedure, computing more of the brackets using Lemma~\ref{WErelations}. However, these will become more and more complicated. Also, for a general pair of manifolds, it is hard to determine which brackets actually give us something new, that is, something that could not be expressed as linear combinations of previously obtained vectors. Rather than giving the total picture, we will therefore focus on giving some sufficient conditions, which are usually more simple to check.

\section{Sufficient conditions for controllability}

Let $R$ and $\widehat R$ be the curvature tensor on $M$ and $\widehat M$ respectively.
Define a new 4-tensor of elements in $D$, by
$$\overline{R} = \pi^*(R) - \widehat \pi^*(\widehat R).$$
Remark that $\overline R$ may also be seen as a bilinear map of two elements in $\bigwedge^2 D$.
Use $\overline{\nabla R}$ to denote the $5$-tensor on $D$, defined by $\pi^*(\nabla R) - \widehat \pi^*(\nabla \widehat R).$
Finally, introduce a bundle morphism $\overline{\calR}: \bigwedge^2 D \to \bigwedge^2 D^*,$
so that
\begin{equation} \label{overlineOmega} \overline{\calR}(\overline{\xi}_1)(\overline{\xi}_2) = \overline{R}\Big(\overline \xi_2 \, , \overline \xi_2\Big)
\qquad \overline \xi_1, \overline \xi_2 \in \bigwedge^2 D_q.\end{equation}

\subsection{Projection of the results on $\calD$}
From the discussion in previous section, we have the following formulations for the brackets of $D$.
\begin{lemma} \label{backtoD}
Let $(e,U)$ and $(\widehat e, \widehat U)$ be two local sections of $F(M)$ and $F(\widehat M)$, respectively.
Then on $Q|_{U \times \widehat U}$, in terms of the notation introduced in Section \ref{rollingdistribution},
$$D^2 = D^1 \oplus \spn \bigg\{ \sum_{1\leq \alpha < \beta \leq n} \overline{R}(\overline{e}_\alpha, \overline{e}_\beta, \overline e_i, \overline e_j)W_{\alpha\beta}^\ell\bigg\}_{i,j=1}^n$$
\begin{align*} D^3 = D^2 + \spn \bigg\{ & \sum_{1\leq \alpha < \beta \leq n} \overline{\nabla R}(\overline{e}_\alpha, \overline{e}_\beta, \overline e_i, \overline e_j, \overline e_k) W_{\alpha\beta}^\ell + qR(e_i, e_j) e_k - \widehat{R}(qe_i, qe_j) qe_k \\
& + \sum_{1\leq \alpha < \beta \leq n} \left\langle qe_\alpha, \nabla_{qR(e_i, e_j) e_k - \widehat{R}(qe_i, qe_j) qe_k} q e_\beta\right\rangle \bigg\}_{i,j,k=1}^n.
\end{align*}
\end{lemma}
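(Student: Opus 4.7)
The plan is to exploit the Corollary after Theorem \ref{mathcalDtoD} (which gives $\varpi_*\calD^k = D^k$), perform the bracket calculation upstairs on $F(M)\times F(\widehat M)$ using the basis $\{X_j+\widehat X_j\}_{j=1}^n$ of $\calD$, and then translate back to the $Q$-coordinates of Section \ref{rollingdistribution}. The key simplification throughout is that $X_i$ is tangent to the $F(M)$-factor while $\widehat X_j$ is tangent to the $F(\widehat M)$-factor, so every cross bracket vanishes and every iterated bracket of $\calD$ splits cleanly into two independent pieces to which Lemmas \ref{Xbracketlemma} and \ref{XXXbracket} apply directly.

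For the formula for $D^2$, I would compute
\begin{equation*}
[X_i+\widehat X_i,\, X_j+\widehat X_j] = [X_i,X_j] + [\widehat X_i,\widehat X_j] = -\scrW_{ij}(R) - \scrW_{ij}(\widehat R)
\end{equation*}
by Lemma \ref{Xbracketlemma}, then project via $\varpi_*$ at a chosen lift $(f,\widehat f)\in\varpi^{-1}(q)$ using the identifications $\varpi_*\Psi^r_{\alpha\beta} = -W^\ell_{\alpha\beta}$ and $\varpi_*\widehat\Psi^r_{\alpha\beta} = W^r_{\alpha\beta}$ from the proof of Theorem \ref{mathcalDtoD}, together with the inversion of the change of basis $\Psi^r_{\alpha\beta}=\sum_{l,s}f_{\alpha l}f_{\beta s}\Psi^\ell_{ls}$ recorded just after \eqref{leftrightvector}. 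Re-expressing the tensor components $R(f_\alpha,f_\beta,f_i,f_j)$ and $\widehat R(\widehat f_\alpha,\widehat f_\beta,\widehat f_i,\widehat f_j)$ in the local frames $e,\widehat e$ and collecting terms produces precisely $\sum_{\alpha<\beta}\overline R(\overline e_\alpha,\overline e_\beta,\overline e_i,\overline e_j)W^\ell_{\alpha\beta}$. The direct sum decomposition is automatic since the new vectors are vertical for $\overline\pi$ while $D^1$ is not.

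For $D^3$, applying Lemma \ref{XXXbracket}(a) separately on each factor gives
\begin{equation*}
[X_k+\widehat X_k,[X_i+\widehat X_i,X_j+\widehat X_j]] = -\scrW_{ijk}(\nabla R) - \scrW_{ijk}(\nabla\widehat R) + \sum_{s=1}^n \bigl(\scrE_{skij}(R)X_s + \scrE_{skij}(\widehat R)\widehat X_s\bigr).
\end{equation*}
The $\scrW$-terms project exactly as in the $D^2$ step to produce the claimed $\sum_{\alpha<\beta}\overline{\nabla R}(\overline e_\alpha,\overline e_\beta,\overline e_i,\overline e_j,\overline e_k)W^\ell_{\alpha\beta}$ contribution. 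For the $\scrE\cdot X$ terms I would invoke the local formula \eqref{eq:localX} for $X_s$ and $\widehat X_s$, and use the curvature-operator identity $\sum_s R(e_s,e_k,e_i,e_j)e_s = -R(e_i,e_j)e_k$ (which is immediate from the skew symmetry of the endomorphism $R(e_i,e_j)$) to recognize that the resulting vector on $Q$ has $M$-projection $-R(e_i,e_j)e_k$ and $\widehat M$-projection $-\widehat R(qe_i,qe_j)qe_k$.

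The main obstacle, and the real content of the proof, is the final bookkeeping: reading off the result modulo $D^2$. I would subtract from the projected triple-bracket the horizontal lift $\overline v\in D^1$ of $v=-R(e_i,e_j)e_k$ provided by \eqref{basisD}. The $M$-components cancel, the $\widehat M$-components combine into $qR(e_i,e_j)e_k - \widehat R(qe_i,qe_j)qe_k$, and the $W^\ell$-contributions arising from the Christoffel symbols $\Gamma^\alpha_{s\beta}$ and $\widehat\Gamma^\alpha_{s\beta}$ in \eqref{eq:localX} and in \eqref{basisD} combine to produce the stated Christoffel-type correction $\sum_{\alpha<\beta}\langle qe_\alpha, \nabla_{qR(e_i,e_j)e_k-\widehat R(qe_i,qe_j)qe_k}\,qe_\beta\rangle W^\ell_{\alpha\beta}$; this term encodes precisely the mismatch between viewing $\widehat R(qe_i,qe_j)qe_k$ through the frame $\widehat e$ and viewing $qR(e_i,e_j)e_k$ through the frame $qe$. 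Once the identity is established pointwise with a convenient choice of lift, equivariance of both sides under the diagonal $\SO(n)$-action (equivalently, $C^\infty(Q)$-linearity as $i,j,k$ vary) promotes it to an equality of $C^\infty(Q)$-modules on the whole trivializing neighborhood $Q|_{U\times\widehat U}$.
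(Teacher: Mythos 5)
Your proposal is correct and follows essentially the same route as the paper: compute the iterated brackets of $\{X_s+\widehat X_s\}$ upstairs on $F(M)\times F(\widehat M)$ using Lemmas \ref{Xbracketlemma} and \ref{XXXbracket}(a), exploit $[X_i,\widehat X_j]=0$, push forward by $\varpi_*$ via the identifications from the proof of Theorem \ref{mathcalDtoD}, and reduce modulo the rolling distribution. The only cosmetic difference is that you perform the mod-reduction downstairs (subtracting a $D^1$-horizontal lift on $Q$) whereas the paper absorbs the $X_s$-half into $\calD$ upstairs before projecting; since $\varpi_*\calD=D^1$ these are the same step.
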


\begin{proof}
The formula for $D^2$ follows directly from Lemma \ref{Xbracketlemma} and the local formulation of $\varpi_*$ given in the proof of Theorem \ref{mathcalDtoD}. To see how the expression for $D^3$ follows from Lemma \ref{XXXbracket} (a), observe first that
\begin{align*} &\sum_{s=1}^n \left(\scrE_{skij}(R) X_s + \scrE_{skij}(\widehat R) \widehat X_s \right)
=  - \sum_{s=1}^n \left(\scrE_{ijks}(R) X_s + \scrE_{ijks}(\widehat R) \widehat X_s \right) \\
= & \sum_{s=1}^n \left(\scrE_{ijks}(R)  - \scrE_{ijks}(\widehat R)\right) \widehat X_s \quad (\text{mod } \calD).\end{align*}
Furthermore
\begin{align*}
& \varpi_*  \sum_{s, \mu, \lambda, \kappa, =1}^n f_{i\mu} f_{j\lambda} f_{k\kappa} \left(\scrE_{\mu\lambda\kappa s}(R)  - \scrE_{\mu\lambda\kappa s}(\widehat R)\right) \widehat X_s \\
= \, &  q R(e_i, e_j) e_k - \widehat{R}(qe_i, qe_j) qe_k  - \sum_{1 \leq \alpha < \beta \leq n}^n \left\langle q e_\alpha, \nabla_{q R(e_i, e_j) e_k} qe_\beta \right\rangle W_{\alpha\beta}^\ell \\
& - \sum_{1 \leq \alpha < \beta \leq n}^n \left\langle q e_\alpha, \nabla_{R(q e_i, q e_j) q e_k} qe_\beta \right\rangle W_{\alpha\beta}^\ell.
\end{align*}
\end{proof}

\begin{corollary} \label{corD3dim}
Define a bundle morphism $\overline{\Xi}: D \oplus \bigwedge^2 D \to D^*$ by
$$\overline{\Xi}(\overline v, \overline \xi) = \iota_{\overline v}\circ \overline{\calR}(\overline \xi),
\qquad \overline{v} \in D_q, \overline{\xi}\in \bigwedge^2 D_q,$$
where $\iota_{\overline v}: \overline\eta \mapsto \overline \eta(\overline v, \cdot )$
for any $\overline v \in D_q$ and two form $\overline \eta$. Then $\dim D^3_q \geq n+ \rank \overline{\Omega}|_q +\rank \overline{\Xi}|_q.$
\end{corollary}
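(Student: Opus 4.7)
The plan is to combine the explicit local generators of $D^3_q$ furnished by Lemma \ref{backtoD} with the linear algebra identity
\begin{equation*}
\dim D^3_q = \dim \overline\pi_*(D^3_q) + \dim\bigl(D^3_q \cap \ker\overline\pi_*|_q\bigr),
\end{equation*}
and to bound the two summands by $\rank \overline{\calR}|_q$ and $n+\rank\overline\Xi|_q$ respectively.

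For the vertical summand I would use the observation from Lemma \ref{backtoD} that $D^2_q = D_q \oplus V_2$, where
\[ V_2 = \spn\bigl\{ \textstyle\sum_{\alpha<\beta} \overline R(\overline e_\alpha,\overline e_\beta,\overline e_i,\overline e_j)\, W^\ell_{\alpha\beta}|_q \bigr\}_{i,j} \subseteq \ker\overline\pi_*|_q. \]
Taking $\{W^\ell_{\alpha\beta}|_q\}_{\alpha<\beta}$ as a basis of $\ker\overline\pi_*|_q$ and identifying it with $\bigwedge^2 D_q^*$ via the dual basis of $\overline e_\alpha \wedge \overline e_\beta$, the $(i,j)$-generator of $V_2$ is identified with $\overline\calR(\overline e_i\wedge \overline e_j)$. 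Hence $\dim V_2 = \rank \overline\calR|_q$, and the inclusion $V_2\subseteq D^2_q\subseteq D^3_q$ supplies the bound on $\dim(D^3_q\cap\ker\overline\pi_*|_q)$.

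For the horizontal summand, $\overline\pi_*(D_q)$ is the graph $\{(u,qu):u\in T_mM\}$ of dimension $n$. The additional generators of $D^3_q$ provided by Lemma \ref{backtoD}, stripped of their vertical parts, project under $\overline\pi_*$ to $(0,\widehat v_{ijk})$ where $\widehat v_{ijk} := qR(e_i,e_j)e_k - \widehat R(qe_i,qe_j)qe_k \in T_{\widehat m}\widehat M$. Since the graph of $q$ meets $\{0\}\times T_{\widehat m}\widehat M$ only at the origin, $\dim \overline\pi_*(D^3_q)\geq n + \dim\spn\{\widehat v_{ijk}\}$. Unwinding the definition of $\overline\Xi$ yields
\begin{equation*}
\overline\Xi(\overline e_k, \overline e_i\wedge \overline e_j)(\overline e_l) = \overline R(\overline e_i,\overline e_j,\overline e_k,\overline e_l) = \bigl\langle R(e_i,e_j)e_k - q^{-1}\widehat R(qe_i,qe_j)qe_k,\, e_l \bigr\rangle,
\end{equation*}
and since $q$ is a linear isometry, the rank of $\overline\Xi|_q$ equals $\dim\spn\{\widehat v_{ijk}\}$. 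Adding the two bounds completes the proof.

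The real content lies in the correct identification of $V_2$ with the image of $\overline\calR$ and of the horizontal projections with the image of $\overline\Xi$; once these identifications are in place, the dimension count is immediate. No bracket computations beyond those already carried out in Lemmas \ref{Xbracketlemma} and \ref{backtoD} are needed, so the main obstacle is purely one of notational book-keeping, together with the verification that the $\widehat\pi$-horizontal contributions are not absorbed by the graph $\{(u,qu)\}$ when passing to $\overline\pi_*(D^3_q)$.
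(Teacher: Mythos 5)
Your proposal is correct and follows essentially the same route as the paper: the paper also reduces everything to showing that $\rank\overline{\Xi}|_q$ equals the dimension of $\spn\{qR(e_i,e_j)e_k - \widehat R(qe_i,qe_j)qe_k\}\subset T_{\widehat\pi(q)}\widehat M$ (verified there by exhibiting $\overline\Xi(\overline e_k,\overline e_i\wedge\overline e_j)$ as $\flat\overline Y-\flat\overline Z$ and applying $\widehat\pi_*\sharp$), while leaving the rank--nullity decomposition and the contributions from $D^1_q$ and the vertical $D^2$-part implicit. Your write-up makes that bookkeeping explicit, which is sound; note only that the $\overline\Omega$ in the statement should be read as $\overline{\calR}$, a relic of an earlier naming visible in the label of the defining equation.
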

\begin{proof}
Given point $q \in Q$, introduce local coordinates in a neighborhood of $q$, in the way demonstrated in Section \ref{rollingdistribution}. We will also keep the same notation from the previous mentioned section.
Then, all we need to show is that for a given $q \in Q$, the dimension of
\begin{equation} \label{spanspace}
\spn \left\{q R(e_i, e_j) e_k(\pi(q)) - R(qe_i, qe_j) qe_k(\widehat{\pi}(q)) \right\}_{i,j,k =1}^n \subset
T_{\widehat{\pi}(q)} \widehat{M},\end{equation}
is equal to $\rank \overline \Xi_q$.

Introduce isomorphisms $\flat:D \to D^*$ and $\sharp:D^* \to D$, relative to the metric defined in \eqref{metricD}. Use the same symbols for the isomorphisms between the tangent bundle and the cotangent bundle on $M$ and $\widehat M$, induced by their respective metrics. Observe that
\begin{align*} \overline{\Xi}(\overline e_k, \overline e_i \wedge \overline e_j) 
& = \pi^*\Big(\flat(R(e_i, e_j) e_k)\Big) - \widehat \pi^* \Big(\flat(\widehat{R}(qe_i,qe_j)qe_k\Big) \\
& = \flat \overline Y - \flat \overline Z,
\end{align*}
where
\begin{align*} \overline Y= & \, R(e_i, e_j)e_k + q R(e_i, e_j)e_k \\
&+ \sum_{1 \leq \alpha < \beta \leq n} \left(
\left\langle e_\alpha, \nabla_{R(e_i, e_j) e_k} e_\beta \right\rangle -
\left\langle qe_\alpha, \nabla_{qR(e_i, e_j) e_k} qe_\beta \right\rangle \right) W_{\alpha\beta}^\ell,\end{align*}
\begin{align*} \overline Z= & \, q^{-1} \widehat{R}(q e_i, qe_j) qe_k + \widehat{R}(q e_i, qe_j) qe_k \\
&+ \sum_{1 \leq \alpha < \beta \leq n} \left(
\left\langle e_\alpha, \nabla_{q^{-1} \widehat{R}(q e_i, qe_j) qe_k} e_\beta \right\rangle -
\left\langle qe_\alpha, \nabla_{\widehat{R}(q e_i, qe_j) qe_k} qe_\beta \right\rangle \right)W_{\alpha\beta}^\ell.
\end{align*}
From this, it becomes clear that
$\widehat{\pi}_* \sharp$ is a bijective linear map from the image of $\overline{\Xi}_q$ to \eqref{spanspace}.
\end{proof}

\subsection{Sufficient condition in terms of the curvature tensor and sectional curvature}
As mentioned before, there is a strong connection between controllability and geometry in the two dimensional case.
\begin{theorem}[\cite{AS, BH}] \label{AS2dim}
For $q \in Q$, let $\varkappa_q$ denote the Gaussian curvature of $M$ at $\pi(q)$,
and let $\widehat{\varkappa}_q$ denote the Gaussian curvature of $\widehat{M}$ at $\widehat{\pi}(q)$.
Then
$$\dim \calO_q = 5, \qquad \text{if and only if} \qquad \varkappa - \widehat{\varkappa} \not \equiv 0 \text{ on } \calO_q.$$
If $\varkappa - \widehat{\varkappa} \equiv 0$ on $\calO_q$, then $\dim \calO_q = 2.$
\end{theorem}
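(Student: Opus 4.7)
The plan is to specialize Lemma \ref{backtoD} and Corollary \ref{corD3dim} to dimension two, where the Riemann curvature tensor is completely determined by the Gaussian curvature. Since $n=2$, $\dim Q = 2 + 2 + \dim\SO(2) = 5$ and $D$ has rank $2$, while the vertical fibre of $\overline\pi\colon Q \to M\times\widehat M$ is one-dimensional, spanned locally by $W_{12}^\ell$. In this dimension one has $R(Y_1,Y_2,Y_3,Y_4)=\varkappa(\langle Y_1,Y_3\rangle\langle Y_2,Y_4\rangle-\langle Y_1,Y_4\rangle\langle Y_2,Y_3\rangle)$ and similarly for $\widehat R$, so the only essentially nonzero component of $\overline R$ in an orthonormal basis is $\overline R(\overline e_1,\overline e_2,\overline e_1,\overline e_2)=\varkappa-\widehat\varkappa$. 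Consequently the formula in Lemma \ref{backtoD} collapses to
\[ D^2 = D + \spn\{(\varkappa-\widehat\varkappa)W_{12}^\ell\}. \]

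If $\varkappa-\widehat\varkappa\equiv 0$ on $\calO_q$, then $D^2=D$ pointwise on the orbit, and inductively $D^k=D$ there, giving $\Lie_{q'}D=D_{q'}$ for every $q'\in\calO_q$. The minimal-orbit criterion recalled in Section 2.4 (Frobenius together with the orbit theorem) then forces $\calO_q$ to be the two-dimensional integral leaf of $D$ through $q$.

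Conversely, suppose there exists $q'\in\calO_q$ with $(\varkappa-\widehat\varkappa)(q')\ne 0$. I will apply Corollary \ref{corD3dim} at $q'$: both $\bigwedge^2 D_{q'}$ and $\bigwedge^2 D^*_{q'}$ are one-dimensional, and $\overline\calR$ sends $\overline e_1\wedge\overline e_2$ to $(\varkappa-\widehat\varkappa)(q')$ times the area form on $D_{q'}$, so $\rank\overline\calR|_{q'}=1$. Contracting this nonzero two-form with $\overline e_1$ and $\overline e_2$ in turn gives two independent covectors in $D^*_{q'}$, hence $\rank\overline\Xi|_{q'}=2$. Then $\dim D^3_{q'}\ge 2+1+2=5=\dim Q$, so $D$ is bracket-generating at $q'$, and by the orbit theorem $\calO_q=\calO_{q'}$ is an open five-dimensional submanifold of $Q$.

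The only real obstacle is the algebraic verification that in dimension two $\overline R$ is governed by the single scalar $\varkappa-\widehat\varkappa$; once that is in hand, the sharp dichotomy follows automatically, since the one-dimensionality of the vertical fibre means $D^2$ is either equal to $D$ or already picks up the vertical direction, and as soon as the latter happens the $D^3$-term of Lemma \ref{backtoD} immediately contributes both missing horizontal directions on $\widehat M$, leaving no room for intermediate dimensions.
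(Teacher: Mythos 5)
Your proof is correct, but note that the paper itself does not prove Theorem~\ref{AS2dim}: it is cited to \cite{AS,BH} and accompanied only by the one-sentence remark that in dimension two $D$ is either bracket generating or involutive. What you have done is supply a self-contained derivation from the paper's own machinery --- Lemma~\ref{backtoD}, Corollary~\ref{corD3dim}, and the minimal-orbit criterion stated in Section~2.4 --- specialized to $n=2$. That route is sound: the one-dimensional vertical fibre forces $D^2$ to equal either $D$ or $D\oplus\ker\overline\pi_*$, and in the latter case your rank count $\rank\overline\calR=1$, $\rank\overline\Xi=2$ gives $\dim D^3_{q'}\geq 2+1+2=5$, so $D$ is bracket generating at $q'$ and $\calO_q=\calO_{q'}$ is open. (Two minor points: with the paper's sign convention $R(Y_1,Y_2,Y_3,Y_4)=\langle R(Y_1,Y_2)Y_3,Y_4\rangle$ one gets $\overline R(\overline e_1,\overline e_2,\overline e_1,\overline e_2)=-(\varkappa-\widehat\varkappa)$, but the sign is immaterial to the rank argument; and the minimal-orbit criterion ``$\dim\calO_{q_0}=n \Leftrightarrow \Lie_{q'}D=D_{q'}$ on $\calO_{q_0}$'' is asserted without proof in Section~2.4 --- it does hold, via Frobenius applied to $D|_{\calO_{q_0}}$ inside the orbit manifold, but you are relying on the paper for it rather than verifying it.) Compared with the paper's bare citation, your approach makes explicit that the two-dimensional case is a genuine corollary of Theorem~\ref{maintheorem}'s curvature criterion rather than an independent input, which is a useful clarification; what it costs is that you inherit whatever is unproved in Section~2.4.
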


The ``if and only if" in the above theorem follows from the fact that in two dimensions, the rolling distribution $D$ at a point $q$, is either bracket generating or involutive. This does not hold in higher dimensions, however, but we are able to present the following generalization.

\begin{definition}
The smallest integer $k$ such that $D^k_q = \Lie_q D$ is called the step of $D$ at $q$. 
\end{definition}

\begin{theorem} \label{maintheorem}
Let $\overline \calR$ be as defined in \eqref{overlineOmega}. Then, for any element $q, q_0 \in Q$, the following holds.
\begin{itemize}
\item[(a)] $\dim \calO_{q_0} = n$ if an only if $\overline{\calR}|_{\calO_{q_0}} \equiv 0$.
\item[(b)] If $\overline \calR_q$ is an isomorphism, then $D$ is bracket generating of step 3 at $q$.

Hence, if $\calO_{q_0}$ contains a point $q$, so that $\overline \calR|_q$ is an isomorphism, then $\calO_{q_0}$ is an open submanifold.
\end{itemize}
\end{theorem}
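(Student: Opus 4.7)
The plan is to deduce both parts from Lemma \ref{backtoD} and Corollary \ref{corD3dim}, reducing everything to counting how the non-degeneracy of $\overline{\calR}$ forces both $D^2$ and $D^3$ to attain the maximal possible increments in dimension.

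For part (a), the orbit $\calO_{q_0}$ is tangent to $D$ and has dimension at least $n$. By the Orbit Theorem combined with Frobenius, $\dim\calO_{q_0}=n$ precisely when $D$ is involutive on $\calO_{q_0}$, i.e. $D^2_q=D_q$ for every $q\in\calO_{q_0}$. Lemma \ref{backtoD} expresses $D^2$ modulo $D^1$ as the span of the vectors $\sum_{\alpha<\beta}\overline{R}(\overline e_\alpha,\overline e_\beta,\overline e_i,\overline e_j)\,W^\ell_{\alpha\beta}$, and since $\{W^\ell_{\alpha\beta}\}_{\alpha<\beta}$ is a local basis of the vertical bundle $\ker\overline\pi_*$, these vectors vanish at $q$ if and only if every coefficient $\overline{R}(\overline e_\alpha,\overline e_\beta,\overline e_i,\overline e_j)$ vanishes, which is exactly $\overline{\calR}|_q=0$. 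Applying this pointwise throughout the orbit settles (a).

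For part (b), I will show $\dim D^3_q=\dim Q=2n+\binom{n}{2}$ by verifying that both terms on the right-hand side of Corollary \ref{corD3dim} are maximal. Since $\overline{\calR}_q\colon\bigwedge^2 D_q\to\bigwedge^2 D_q^*$ is an isomorphism, it has rank $\binom{n}{2}$. For the second term, the identity $\overline{\Xi}_q(\overline v,\overline\xi)=\iota_{\overline v}\overline{\calR}_q(\overline\xi)$ together with surjectivity of $\overline{\calR}_q$ means that the image of $\overline{\Xi}_q$ equals the span of $\{\iota_{\overline v}\overline\eta:\overline v\in D_q,\;\overline\eta\in\bigwedge^2 D_q^*\}$. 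Taking a basis $\{\overline e_i\}$ of $D_q$ with dual basis $\{\overline e^i\}$, the identity $\iota_{\overline e_i}(\overline e^i\wedge\overline e^j)=\overline e^j$ for $i\neq j$ realises every dual basis vector, so the image is all of $D_q^*$ and $\rank\overline{\Xi}_q=n$. Summing, $\dim D^3_q\geq n+\binom{n}{2}+n=\dim Q$, hence $D^3_q=T_qQ$. Since $D^2_q\subseteq D_q+\ker\overline\pi_*|_q$ has dimension at most $n+\binom{n}{2}<\dim Q$, we have $D^2_q\subsetneq D^3_q$, so the step is exactly $3$. The final claim is then immediate from the Orbit Theorem: if $q\in\calO_{q_0}$ and $D$ is bracket generating at $q$, then $T_q\calO_{q_0}\supseteq\Lie_qD=T_qQ$, so $\calO_{q_0}$ is open.

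The only non-routine step I expect is recognising that the image of $\overline{\Xi}_q$ is governed solely by the algebra of the contraction pairing $D_q\otimes\bigwedge^2 D_q^*\to D_q^*$ once $\overline{\calR}_q$ is surjective; the remaining dimension counts are direct consequences of Lemma \ref{backtoD} and Corollary \ref{corD3dim}, which have already done the geometric work.
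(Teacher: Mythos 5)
Your proof is correct and follows essentially the same approach as the paper: part~(a) is read off from Lemma~\ref{backtoD} together with the Orbit--Frobenius characterization, and part~(b) is a rank count from the same lemma. The only mild difference is that you route part~(b) through Corollary~\ref{corD3dim} and make the contraction argument for $\rank\overline{\Xi}_q=n$ explicit (and explicitly verify that the step is not $2$), where the paper instead works directly with the spans in Lemma~\ref{backtoD} and leaves those details to the reader.
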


\begin{remark}
The statement in Theorem \ref{maintheorem} (a) was also presented in \cite[Cor. 5.28]{Chitour}.
By combining \cite[Cor. 5.26]{Chitour} with \cite[Prop. 5.17]{Chitour}, and doing some simple calculations, we can also obtain the result of Theorem 3(b), however, this is not stated.
The proof is presented here, since the approach in \cite{Chitour} differ from ours, and since the results were obtained independently.
\end{remark}

\begin{proof}
Statement (a) becomes obvious from Lemma \ref{backtoD}.
To prove (b), let $\overline \pi(q) = (m, \widehat m)$, where $\overline \pi:Q \to M \times \widehat M$ is the projection.
Pick respective local sections $(e,U)$ and $(\widehat e, \widehat U)$ of $F(M)$ and $F(\widehat M)$ around $m$ and $\widehat m$ and use them to introduce the local coordinates defined in Section \ref{rollingdistribution}. Since $\overline \calR|_q$ is an isomorphism, we know that
$$\spn\bigg\{ \sum_{1\leq \alpha < \beta \leq n} \overline{R}(\overline{e}_\alpha, \overline{e}_\beta, \overline e_i, \overline e_j)W_{\alpha\beta}^\ell \Big|_q \bigg\}_{i,j=1}^n = \spn \big\{W_{ij}^\ell(q)\big\}_{i,j=1}^n,$$
$$\spn \left\{ \left(qR(e_i, e_j) e_k - \widehat{R}(qe_i, qe_j) qe_k\right)\Big|_q \right\}_{i,j,k=1}^n = \spn \big\{qe_i(m)\big\}_{i=1}^n.$$
Lemma \ref{backtoD} then tells us that
\begin{align*} D_q^3 &= \spn\{\overline e_j(q), qe_i(m),  W_{\alpha\beta}^\ell(q) \}_{i,j,\alpha\beta=1}^n \\
& = \spn\{ e_j(m), \widehat e_i(\widehat m),  W_{\alpha\beta}^\ell(q) \}_{i,j,\alpha\beta=1}^n = T_q Q.\end{align*}
\end{proof}

To state that $\overline \calR_q$ is an isomorphism is equivalent to claiming that $\overline R_q$ induces a pseudo-inner product on $\bigwedge^2 D_q$, i.e. a nondegenerate bilinear map.
Therefore, we have the following way we can check if $\overline \calR$ is an isomorphism at $q$.
\begin{corollary} Choose any orthonormal basis $\{v_j\}$
of $T_mM, m = \pi(q)$. Compute the determinant of the $\tfrac{n(n-1)}2 \times \tfrac{n(n-1)}2$ matrix
$$\det \Big(R(v_\alpha, v_\beta, v_i, v_j) - \widehat R(qv_\alpha, qv_\beta, qv_i, qv_j) \Big),$$
$$1 \leq \alpha < \beta \leq n \text{ are row indices,} \quad 1 \leq i < j \leq n \text{ are column indices,}$$
If this determinant is nonzero, then we have local controllability at $q$.
\end{corollary}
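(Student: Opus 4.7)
The plan is to recognize the matrix displayed in the statement as the matrix of the bilinear form $\overline{\calR}_q$ in a natural orthonormal basis of $\bigwedge^2 D_q$, so that nonvanishing of its determinant is exactly the hypothesis of Theorem~\ref{maintheorem}(b).

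First I would lift the basis $\{v_j\}$ of $T_mM$ to $D_q$. By the metric \eqref{metricD}, the projection $\pi_*|_{D_q} : D_q \to T_mM$ is a linear isometry, so there is a unique $\overline v_j \in D_q$ with $\pi_* \overline v_j = v_j$. The no-slip condition built into $D$ (visible in the local formula \eqref{basisD}, or equivalently via Corollary~\ref{stochasticroll}) forces $\widehat \pi_* \overline v_j = q v_j$. Thus $\{\overline v_j\}_{j=1}^n$ is an orthonormal basis of $D_q$ and $\{\overline v_\alpha \wedge \overline v_\beta\}_{\alpha<\beta}$ is the induced basis of $\bigwedge^2 D_q$ (of dimension $\tfrac{n(n-1)}{2}$).

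Next I would read off the matrix of $\overline{\calR}_q$ in this basis and its dual. From $\overline R = \pi^* R - \widehat \pi^* \widehat R$ together with \eqref{overlineOmega},
\[
\overline{\calR}_q\bigl(\overline v_\alpha \wedge \overline v_\beta\bigr)\bigl(\overline v_i \wedge \overline v_j\bigr) = \overline R\bigl(\overline v_\alpha, \overline v_\beta, \overline v_i, \overline v_j\bigr) = R(v_\alpha, v_\beta, v_i, v_j) - \widehat R(qv_\alpha, qv_\beta, qv_i, qv_j),
\]
which is precisely the $((\alpha\beta),(ij))$ entry of the matrix in the corollary. Hence its determinant is (up to the ordering convention on the dual basis, which only affects the sign) the determinant of the linear map $\overline{\calR}_q$ between spaces of equal dimension.

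The conclusion is then immediate: nonvanishing of this determinant is equivalent to $\overline{\calR}_q$ being an isomorphism, so by Theorem~\ref{maintheorem}(b), $D$ is bracket generating of step $3$ at $q$, whence $\calO_q$ is an open submanifold of $Q$, i.e.\ we have local controllability at $q$. There is really no obstacle here: once the identification of the displayed matrix with the matrix of $\overline{\calR}_q$ in the basis $\{\overline v_\alpha \wedge \overline v_\beta\}$ is made explicit, the corollary is just a computational reformulation of Theorem~\ref{maintheorem}(b).
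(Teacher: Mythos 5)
Your proposal is correct and follows the same reasoning the paper uses (the paper presents this corollary as an immediate reformulation of Theorem~\ref{maintheorem}(b) after the remark that $\overline{\calR}_q$ being an isomorphism is the same as $\overline{R}_q$ being a nondegenerate bilinear form on $\bigwedge^2 D_q$, and gives no separate proof). Your explicit identification of the lifted basis $\overline v_j$ with $\pi_*\overline v_j = v_j$, $\widehat\pi_*\overline v_j = qv_j$ via the metric \eqref{metricD} and the no-slip condition is exactly the bookkeeping the paper leaves implicit.
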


From this we obtain the following corollary.

\begin{corollary}
Define a function $\overline \varkappa_q$ on 2-dimensional planes $L$ in $D_q$ by the formula
$$\overline \varkappa_q(L) = \varkappa_{\pi(q)}(\pi_* L) - \widehat \varkappa_{\widehat \pi (q)} (\widehat \pi_* L),$$
where $\varkappa_m$ and $\widehat \varkappa_{\widehat m}$ denotes the respective sectional curvatures of $M$ and $\widehat M$ at the indicated points. Then
\begin{itemize}
\item[(a)] $\dim \calO_{q_0} = n$ if an only if $\overline{\varkappa}_{q} \equiv 0$ for any $q \in \calO_{q_0}$.
\item[(b)] If $\overline \varkappa_q > 0$ or $\overline \varkappa_q < 0$, then $D$ is bracket generating of step 3 at $q$.
\end{itemize}
\end{corollary}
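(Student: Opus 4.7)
The plan is to deduce both parts from Theorem \ref{maintheorem} by translating the sign conditions on $\overline \varkappa_q$ into conditions on the bilinear form underlying $\overline \calR_q$. The bridge is the identity
$$\overline R(v \wedge w, v \wedge w) = \overline \varkappa_q\bigl(\spn(v, w)\bigr), \qquad v, w \in D_q \text{ orthonormal,}$$
which follows from the definition of sectional curvature together with the fact that $\pi_*$ and $\widehat \pi_*$ both restrict to linear isometries on $D_q$ (see \eqref{metricD}), so orthonormal pairs in $D_q$ push forward to orthonormal pairs in $T_{\pi(q)}M$ and $T_{\widehat \pi(q)}\widehat M$, and the sectional curvatures on the two base manifolds can be read off directly from $\overline R$.

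For part (a), if $\overline \varkappa_q \equiv 0$ then $\overline R(v, w, v, w) = 0$ for every $v, w \in D_q$. As a difference of two Riemann curvature tensors, $\overline R$ satisfies the full set of Riemann symmetries (anti-symmetries in each pair, pair-symmetry, and the first Bianchi identity). Polarizing the vanishing identity $\overline R(v, w, v, w) = 0$ and combining with these symmetries recovers every component of $\overline R$, so $\overline R \equiv 0$ as a $4$-tensor on $D_q$, i.e.\ $\overline \calR_q = 0$. The converse is immediate. Pointwise on $\calO_{q_0}$ this gives $\overline \calR|_{\calO_{q_0}} \equiv 0$ if and only if $\overline \varkappa|_{\calO_{q_0}} \equiv 0$, and Theorem \ref{maintheorem}(a) then delivers the orbit-dimension statement.

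For part (b), I may assume $\overline \varkappa_q > 0$ (the $< 0$ case is entirely symmetric). Then $\xi \mapsto \overline R(\xi, \xi)$ is a quadratic form on $\bigwedge^2 D_q$ which is strictly positive on every nonzero decomposable $2$-vector, and the plan is to upgrade this to definiteness, hence nondegeneracy, of the symmetric bilinear form $\overline R$ on the whole of $\bigwedge^2 D_q$. Once that upgrade is in place, $\overline \calR_q$ is an isomorphism, and Theorem \ref{maintheorem}(b) immediately yields bracket-generation of step $3$ at $q$.

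The hard part of the argument is precisely this upgrade step. In dimensions $n = 2$ and $n = 3$ it is vacuous, since every element of $\bigwedge^2 D_q$ is decomposable, so positivity on the decomposable $2$-vectors already is positivity on the whole space. In higher dimensions, however, decomposable $2$-vectors form only a proper cone in $\bigwedge^2 D_q$ (cut out by the Plücker relations), and strict positivity on a cone that merely spans a vector space is not in general enough to force definiteness of the associated quadratic form. The key additional structure to exploit is the first Bianchi identity satisfied by $\overline R$: together with the strict sign hypothesis, it is what must be used to rule out null directions among the non-decomposable $2$-vectors. This is the delicate piece of the argument, and also the step where the strictness of the sign assumption is essential.
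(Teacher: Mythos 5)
Your plan follows the paper's own proof, and you have put your finger on exactly the weak spot. Part~(a) goes through as you describe: $\overline R$ is a difference of two curvature tensors, hence inherits the full set of algebraic Riemann symmetries, and polarizing $\overline R(v,w,v,w)\equiv 0$ against these symmetries forces $\overline R\equiv 0$; Theorem~\ref{maintheorem}(a) then applies.

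For part~(b), the paper's proof expands $\overline\xi = \sum_{i<j} a_{ij}\,\overline v_i \wedge \overline v_j$ in an orthonormal basis and asserts
\[
\overline R_q(\overline\xi,\overline\xi) \;=\; -\sum_{1\le i<j\le n} a_{ij}^2\,\overline\varkappa_q(L_{ij}),
\]
but this equality is simply false in general: it discards the cross-terms $2a_{ij}a_{kl}\,\overline R_q(\overline v_i\wedge\overline v_j,\overline v_k\wedge\overline v_l)$ with $\{i,j\}\neq\{k,l\}$, which do not vanish for a generic orthonormal basis. So the paper's proof contains precisely the gap you identified, and the Bianchi identity does not rescue it. Indeed the implication ``strictly positive sectional curvature $\Rightarrow$ positive definite curvature operator on $\bigwedge^2$'' is known to fail already for a single Riemann tensor: $\mathbb{CP}^2$ with the Fubini--Study metric has sectional curvatures pinched in $[1,4]$, yet its curvature operator has a nontrivial kernel (this is exactly why it is not a space form, by the differentiable sphere theorem for positive curvature operator). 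Taking $M=\mathbb{CP}^2$ and $\widehat M=\real^4$ then gives $\overline\varkappa_q>0$ everywhere while $\overline\calR_q$ fails to be an isomorphism, so Theorem~\ref{maintheorem}(b) cannot be invoked. Your caution was exactly right, and you were right that $n\le 3$ is safe because every element of $\bigwedge^2 D_q$ is decomposable there; but for $n\ge 4$ neither your outline nor the paper's proof closes the gap, and in fact the route through nondegeneracy of $\overline\calR_q$ does not work without additional hypotheses. Whether the corollary's conclusion nonetheless holds (step-$3$ bracket generation can occur without $\overline\calR_q$ being an isomorphism) would require a separate argument examining the third-order brackets directly.
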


\begin{proof}
If $\overline \varkappa_{q} \equiv 0$, then $\overline R_{q}$ is 0 also. Similarly, if $\overline \varkappa_p > 0$ (resp. $\overline \varkappa_p < 0$) for every $L$, then $-\overline R_q$ (resp. $ \overline R_q$) will be an inner product on $\bigwedge^2 D_q$.

To see this, for the case $\overline \varkappa_q > 0$, we only need to show that $\overline R_q(\overline \xi, \overline \xi) < 0$, whenever $\xi \in \bigwedge^2 D_q$ is nonzero. 
Pick an orthonormal basis basis $\overline v_1, \dots, \overline v_n$ of $D_q$. Write
$$L_{ij} := \spn \{ \overline v_i, \overline v_j\}.$$
In this basis, we have that if $\overline \xi = \sum_{1 \leq i < j \leq n} a_{ij} \overline v_i \wedge \overline v_j$ is nonzero, then
$$\overline R_q ( \overline \xi, \overline \xi) = -\sum_{1 \leq i < j \leq n} a_{ij}^2 \overline{\varkappa}_q (L_{ij} ) > 0.$$
The case $\overline \varkappa_q < 0$ is treated similarly.
\end{proof}

\begin{remark}
All the conditions stated here, are sufficient conditions for local controllability. However, if they hold in all points, they will naturally be sufficient conditions for complete controllability.
\end{remark}

\subsection{Examples}
\begin{example}
We start with two known examples, to verify our results and demonstrate their effectiveness of obtaining information on controllability. 
\begin{itemize} 
\item[(a)] If $M$ is a sphere of radius $r$ and $\widehat M= \real^n$ is the $n$ dimensional Euclidean space, then $M$ has constant sectional curvature $\frac{1}{r^2}$, while $\widehat M$ has constant sectional curvature 0. It follows that $\overline \varkappa_q \equiv \frac{1}{r^2}$ for any $q \in Q$. Hence $D$ is bracket generating at all points, and the system is completely controllable.
\item[(b)] If $M$ and $\widehat M$ are the spheres with respective radii $r$ and $\widehat r$, then
$$\overline \varkappa_q \equiv \frac{1}{r^2} - \frac{1}{\widehat r^2},$$
for any $q \in Q$. Hence the system is completely controllable if and only if $r \neq \widehat r$. When $r = \widehat r$, $D$ becomes an involutive distribution.
\end{itemize}
To compare, see \cite{GGMS,Zimm} for a former proof of the controllability of (a), and \cite{JZ} for a treatment of the example in (b).
\end{example}

\begin{example}
More generally, if $M$ is any manifold with only strictly positive or strictly negative sectional curvature, rolling on $n$-dimensional Euclidean space, then this system is completely controllable (we will later show that this only needs to hold in one point of $M$).
\end{example}

\begin{example}
Let $M = S^2 \times \real$ be the subset the Euclidian space $\real^4$,
$$\left\{(x_0, x_1, x_2, x_4) \in \real^4 \, : \, x_0^2 + x_1^2 + x_2^2 =1 \right\}.$$
Define a local section on the subset $U = \{( x_0, x_1, x_2, x_3) \in M \, : \, x_2 > 0 \},$
with the orthonormal vector fields
$$e_1 = - \sqrt{x_1^2 + x_2^2} \left(-\partial_{x_0} + \frac{x_0}{x_1^2 + x_2^2} \left(x_1 \partial_{x_1}
+ x_2 \partial_{x_2} \right)\right),$$
$$e_2 = \frac{x_2}{\sqrt{x_1^2 + x_2^2}}\left(-\partial_{x_1} + \frac{x_1}{x_2}\partial_{x_2} \right),
\qquad e_3 = \partial_{x_3}.$$
\begin{itemize}
\item[(a)] Let us first consider $M$ rolling on $\real^4$. The rolling distribution can locally be describes by
$$D^1 = \spn \{\overline e_1, \overline e_2, \overline e_3 \}.$$
$$\overline e_1 = e_1 + qe_1, \qquad \overline e_2 = e_2 + qe_2 + \frac{x_0}{\sqrt{x_1^2 + x_2^2}} W_{12},
\qquad \overline e_3 = e_3 + qe_3.$$
$D$ is then of step 3 for any $q \in U$ and
$$D^2 = D^1 \oplus \spn\{ W_{12}\}, \qquad D^3 = D^2 \oplus \spn \{ qe_1, qe_2\}$$
Since $D^3$ is locally finitely generated, we know that $\dim \calO_{q} = 6$ for any $q \in U$ (and for symmetry reasons, every $q \in Q$).
\item[(b)] Let $M$ roll on a copy of itself. Consider the rotation matrix $(q_{ij}) = (\langle e_i, qe_j\rangle).$
Give $(q_{ij})$ the coordinates
$$(q_{ij}) = \begin{tiny} \left(\begin{array}{ccc} \cos \theta \cos \varphi & \sin \theta \cos \psi - \cos \theta \sin \varphi \sin \psi
& \sin \theta \sin \psi + \cos \theta \sin \varphi \cos \psi \\
- \sin \theta \cos \varphi & \cos \theta \cos \psi + \sin \theta \sin \varphi \cos \psi &
\cos \theta \sin \psi- \sin \theta \sin \varphi \cos \psi\\
- \sin \varphi & - \sin \psi \cos \varphi & \cos \varphi \cos \psi \end{array} \right) \end{tiny}.$$
The vector fields spanning $D$ are locally given by
$$\overline e_1 = e_1 + qe_1 - \frac{x_0\left(\sin \theta \cos \psi + \cos \theta \sin \varphi \sin \psi \right)}{\sqrt{x_1^2 + 
x_2^2}} V,$$
$$\overline e_2 = e_2 + qe_2 + \frac{x_0}{\sqrt{x_1^2 + x_2^2}} W_{12}
- \frac{x_0\left(\cos \theta \cos \psi + \sin \theta \sin \varphi \sin \psi \right)}{\sqrt{x_1^2 + 
x_2^2}} V,$$
$$\overline e_3 = e_2 + qe_2 + \frac{x_0 \cos \varphi \sin \psi}{\sqrt{x_1^2 + x_2^2}} V,$$
$$V: =\cos \varphi \cos \psi W_{12}- \cos \varphi \sin \psi W_{13} - \sin \varphi W_{23}.$$
The matrix $-\Big(R(\overline e_\alpha, \overline e_\beta, \overline e_i, \overline e_j)\Big), \, i <j, \alpha < \beta$ is then given by
$$\begin{tiny} \left(\begin{array}{ccc}
1 - \cos^2 \varphi \cos^2 \psi & - \cos^2 \varphi \sin \psi \cos \psi & \cos \varphi \sin \varphi \cos \psi \\
- \cos^2 \varphi \sin \psi \cos \psi & - \cos^2 \varphi \sin^2 \psi & \cos \varphi \sin \varphi \sin \psi \\
\cos \varphi \sin \varphi \cos \psi & \cos \varphi \sin \varphi \sin \psi & - \sin^2 \varphi
 \end{array}\right) \end{tiny}.$$
 It is easily checked that this matrix has rank 2, except when $\sin \varphi = \sin \psi = 0$.
 Restricted to the subset of $Q$ where the latter equation hold, that is, the configurations where
the two copies of the line
$$\lambda  = \{(0,0,0,x_3) \in M \},$$
lie tangent to each other, $D$ is involutiove and the orbits are 3 dimensional. 
 
On the other points, we have that
$$D^2 = D^1 \oplus \spn \{ W_{12}, V \}, \qquad D^3 = D^2 \oplus \spn \{ qe_1, qe_2, qe_3\}.$$
so the orbits have dimension $8$, or codimension 1.

This example illustrates that if we are rolling manifolds of dimension higher than two, the dimension of $\mathcal O_q$ does not only depend on the connecting pair of points $\overline \pi(q) =(m, \widehat m)$.
\end{itemize}
\end{example}

\section{Particular cases}
We present some special results for when the manifolds involved in the rolling are particular nice. We will first deal with locally symmetric spaces, then present some results for rolling on complete spaces. Remark that all of these results are applicable to the case of rolling on $\real^n$, since this is both locally symmetric and complete.

\subsection{Locally symmetric spaces} \label{seclocsym}
Recall the definition of $\Omega$ from \eqref{OmegaDef}.
\begin{proposition}
Let $M$ be locally symmetric and let $\widehat M$ be flat ($\widehat R \equiv 0$). Then $D$ is at most of step 3. $D$ is bracket generating at $q \in Q$ if and only if
$$\Omega|_{\pi(q)}: \bigwedge^2 D_q \to \so(n),$$
is a linear isomorphism. 
\end{proposition}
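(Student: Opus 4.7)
The plan is to work upstairs on $F(M) \times F(\widehat M)$ with the distribution $\calD = \spn\{X_j + \widehat X_j\}_{j=1}^n$, exploit the vanishing of $\nabla R$ and $\widehat R$ to show the filtration stabilizes at step three, and then compute the dimension of $\calD^3$ at a point and match it against $\so(n)$ via $\Omega$.

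First, I would iterate Lemmas \ref{Xbracketlemma} and \ref{WErelations}, using $[X_i, \widehat X_j]=0$ at every step. The first bracket gives
$[X_i+\widehat X_i, X_j+\widehat X_j] = -\scrW_{ij}(R) -\scrW_{ij}(\widehat R) = -\scrW_{ij}(R)$, because $\widehat R\equiv 0$. The second bracket gives
$[X_k+\widehat X_k, \scrW_{ij}(R)] = \scrW_{ijk}(\nabla R)-\sum_s \scrE_{skij}(R)X_s = -\sum_s \scrE_{skij}(R)X_s$, since $\nabla R=0$. Writing $X_s=(X_s+\widehat X_s)-\widehat X_s$, this is congruent modulo $\calD$ to $\sum_s \scrE_{skij}(R)\widehat X_s$. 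For the fourth-order bracket, one has $[X_l+\widehat X_l,\sum_s\scrE_{skij}(R)\widehat X_s]=\sum_s (X_l\scrE_{skij}(R))\widehat X_s$, because $\widehat X_l$ kills functions on $F(M)$ and $[\widehat X_l,\widehat X_s]=-\scrW_{ls}(\widehat R)=0$; and $X_l\scrE_{skij}(R) = \scrE_{skijl}(\nabla R)=0$. Thus $[\calD,\calD^3]\subseteq \calD^3$, so $\calD^4=\calD^3$ and hence $D$ has step at most $3$.

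Next I would compute $\calD^3$ pointwise. Since $\scrW_{ij}(R)$ is vertical in $T F(M)$ and $\widehat X_s$ is horizontal in $T F(\widehat M)$, one obtains the pointwise decomposition $\calD^3_{(f,\widehat f)} = \calD_{(f,\widehat f)} \oplus W_1 \oplus W_2$, where $W_1 = \spn\{\scrW_{ij}(R)|_f\}\subseteq\spn\{\sigma_{\alpha\beta}|_f\}$ and $W_2\subseteq\spn\{\widehat X_s|_{\widehat f}\}$ is spanned by the vectors $\sum_s\scrE_{skij}(R)\widehat X_s|_{\widehat f}$. Since $\omega(\scrW_{ij}(R))$ and $\Omega(X_i,X_j)$ agree up to a sign and symmetry of $R$, $\dim W_1 = \rank \Omega|_f$. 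Because $\calD^3$ has no $\widehat\sigma_{\alpha\beta}$-component while $\ker \varpi_* = \spn\{\sigma_{\alpha\beta}+\widehat\sigma_{\alpha\beta}\}$, we have $\calD^3\cap\ker\varpi_*=0$, so $\varpi_*$ is injective on $\calD^3_{(f,\widehat f)}$ and $\dim D^3_q = n + \rank\Omega|_f + \dim W_2$, whereas $\dim T_qQ = 2n + \tfrac{n(n-1)}2$.

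Finally I would close the equivalence. If $D$ is bracket generating at $q$, then $\rank\Omega|_f+\dim W_2 = n+\tfrac{n(n-1)}2$, and since $\rank\Omega|_f\leq\tfrac{n(n-1)}2$ and $\dim W_2\leq n$, both inequalities must be equalities, so $\Omega|_{\pi(q)}$ is an isomorphism. Conversely, if $\Omega|_{\pi(q)}$ is an isomorphism, I claim the map $T_mM\to\bigwedge^2 T_m^*M\otimes T_mM$ given by $\xi\mapsto R(\xi,\cdot,\cdot,\cdot)$ is injective: if $R(\xi,v,\cdot,\cdot)=0$ for every $v$, then $\Omega(\xi\wedge v)=0$ for every $v$, so by injectivity $\xi\wedge v=0$ for all $v$, forcing $\xi=0$. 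Translated back, this injectivity means the vectors $\{\sum_s\scrE_{skij}(R)\widehat X_s\}_{k,i,j}$ span all of $\spn\{\widehat X_s\}$, so $\dim W_2 = n$ and bracket generation follows. The main obstacle is the bookkeeping in the pointwise dimension count, in particular the identification of $W_1$ with the image of $\Omega|_{\pi(q)}$ on $\bigwedge^2 D_q$ and the fact that this is frame-independent (rank is invariant under $\Ad$-action), so that the statement makes sense without specifying $f$ above $\pi(q)$.
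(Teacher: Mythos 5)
Your proposal is correct and follows essentially the same route as the paper: work upstairs on $F(M)\times F(\widehat M)$ with $\calD = \spn\{X_j+\widehat X_j\}$, use $\widehat R \equiv 0$ and $\nabla R \equiv 0$ together with Lemmas \ref{WErelations} and \ref{Xbracketlemma} to show $\calD^4 = \calD^3$, and then match the pointwise dimension of $\calD^3$ (which injects into $T_qQ$ under $\varpi_*$) against $\dim T_qQ$ via the rank of $\Omega$. Where you go beyond the paper is in spelling out two points that the paper treats very tersely: the explicit decomposition $\calD^3 = \calD^1 \oplus W_1 \oplus W_2$ with the verification that $\calD^3 \cap \ker\varpi_* = 0$, and the implication ``$\Omega$ injective $\Rightarrow$ the vectors $\sum_s \scrE_{skij}(R)\widehat X_s$ span $\spn\{\widehat X_s\}$'' via the contraction argument $R(\xi,v,\cdot,\cdot)=0$ for all $v$ forcing $\xi\wedge v = 0$; the paper asserts this last implication without proof, so your filled-in argument is genuinely useful. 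You also correctly observe the frame-independence of the rank under the $\Ad$-action, which is needed for the proposition's statement to be well-posed. One cosmetic remark: the paper's proof has an apparent typo (``$\calD^2 + \ker\varpi_*$'' where the dimension count forces ``$\calD^3 + \ker\varpi_*$''), and your version has the indices right.
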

\begin{proof}
Consider the bundle $\calD$, and let $X_j$ and $\widehat X_j$ be defined as in \eqref{basisEhresmann}.
Since $\widehat M$ is flat $[X_i + \widehat X_i, X_i + \widehat X_j] = [X_i, X_j].$ Then
$$[X_i, X_j] = -\scrW_{ij}(R), \qquad [X_k, [X_i, X_j]] = \sum_{s=1}^n \scrE_{skij}(R) X_s,$$
$$[X_l, [X_k, [X_i, X_j]]] = -\sum_{s=1}^n \scrE_{skij}(R) \scrW_{sl}(R) \in \calD^2,$$
Hence, $\calD_{(f,\widehat f)}^2 + \ker \varpi_{*(f,\widehat f)} = T_{(f,\widehat f)}\Big(F(M) \times \widehat F(M) \Big)$ only if
$$\spn \left\{ \scrW_{ij}(R) \big|_{(f, \widehat f)} \right\}_{i,j=1}^n = \spn \left\{ \sigma_{ij} \big|_{(f, \widehat f)} \right\}_{i,j=1}^n.$$
and this also implies that
$\spn \{ \sigma_{ij}, \sum_{s=1}^n \scrE_{skij}(R) X_s \} = \spn \{ \sigma_{ij}, f_j \},$
which gives us the desired result.
\end{proof}
When $\widehat M$ is not flat, the results become a little bit more complicated, and require us to introduce some notation. Let $R^l$ denote the $2l +2$-tensor defined by $R^1 = R$ and
$$R^l(Y_\alpha, Y_\beta, Y_{i_1}, Y_{i_2}, \dots, Y_{i_{2l-1}}, Y_{2l}) :=  R^{l-1}(R(Y_\alpha, Y_\beta) Y_{i_1}, Y_{i_2}, \dots, Y_{i_{2l-1}}, Y_{2l}).$$
\begin{lemma} \label{localsymmetriclemma}
If $\nabla R = 0$, then $\nabla R^l = 0$ for any $l \geq 1$.
\end{lemma}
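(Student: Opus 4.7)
The plan is to proceed by induction on $l$. The base case $l=1$ is immediate, since $R^1 = R$ and $\nabla R = 0$ is the hypothesis.

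For the inductive step, assume $\nabla R^{l-1} = 0$. Viewing $R$ as the $(1,3)$-tensor $(Y_\alpha, Y_\beta, Y_{i_1}) \mapsto R(Y_\alpha, Y_\beta) Y_{i_1}$, the recursive definition of $R^l$ expresses it as a tensor contraction of $R^{l-1}$ with $R$: substitute the vector $R(Y_\alpha, Y_\beta)Y_{i_1}$ into the first argument of $R^{l-1}$ and feed in $Y_{i_2}, \dots, Y_{i_{2l}}$ for the remaining $2l-1$ slots. Since the Levi-Civita connection is a tensor derivation and commutes with such contractions via the Leibniz rule, applying $\nabla_W$ for an arbitrary vector field $W$ yields
$$(\nabla_W R^l)(Y_\alpha, Y_\beta, Y_{i_1}, \dots, Y_{i_{2l}}) = (\nabla_W R^{l-1})\bigl(R(Y_\alpha, Y_\beta)Y_{i_1}, Y_{i_2}, \dots\bigr) + R^{l-1}\bigl((\nabla_W R)(Y_\alpha, Y_\beta)Y_{i_1}, Y_{i_2}, \dots\bigr).$$
The first term vanishes by the inductive hypothesis and the second by the assumption $\nabla R = 0$, so $\nabla_W R^l$ vanishes for all $W$ and all choices of the remaining arguments, completing the induction.

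The only point that requires mild care is the derivation of this Leibniz identity: one expands $W\bigl[R^{l-1}(R(Y_\alpha, Y_\beta)Y_{i_1}, Y_{i_2}, \dots)\bigr]$ using the tensor-derivation property of $\nabla$ applied both to $R^{l-1}$ and (as a vector-valued expression) to $R(Y_\alpha, Y_\beta)Y_{i_1}$, and then verifies that all resulting terms containing $\nabla_W Y_\alpha$, $\nabla_W Y_\beta$, or $\nabla_W Y_{i_j}$ cancel precisely against the corresponding terms in the defining formula for $\nabla_W R^l$, leaving only the two terms above. This is routine bookkeeping rather than a conceptual obstacle; I expect no genuine difficulty in carrying out the argument.
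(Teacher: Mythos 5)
Your proof is correct and follows essentially the same route as the paper's: induction on $l$ combined with the Leibniz rule for $\nabla$ acting on the contraction of $R^{l-1}$ with $R$. The paper implements the Leibniz step concretely by choosing a parallel orthonormal frame $v_1,\dots,v_n$ along an arbitrary curve, writing $R^l(v_\alpha,v_\beta,\dots) = \sum_s R(v_\alpha,v_\beta,v_{i_1},v_s)\,R^{l-1}(v_s,\dots)$, and observing that each factor is constant along the curve; your version invokes the tensor-derivation property abstractly, which is the same computation in coordinate-free form.
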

\begin{proof}
We give the proof by induction. Assume that $\nabla R^k = 0$ for $1 \leq k < l.$
Let $m(t)$ be any smooth curve in $M$. Let $v_1(t), \dots, v_n(t)$ be parallel vector fields along $m(t)$.
Then
\begin{align*} &\frac{d}{dt} R^l(v_\alpha, v_\beta, v_{i_1}, v_{i_2}, \dots, v_{2l-1}, v_{2l}) \\
= &\frac{d}{dt} \sum_{s=1}^n R(v_\alpha, v_\beta, v_{i_l}, v_s) R^{l-1}(v_s, v_{i_2}, \dots, v_{2l-1}, v_{2l})
= 0. \end{align*}
Hence $\nabla R^l = 0$ also.
\end{proof}

We introduce a notation related to $R^l$, similar to what we did for $R$. Use $\widehat R^l$ for the analogues tensor on $\widehat M$, and write $\overline R^l$ for the tensor on $D$ defined by $\overline R^l = \pi^*(R^l) - \widehat \pi^*(\widehat R^l).$

\begin{proposition}
Let $M$ and $\widehat M$ both be locally symmetric. Then $D$ is bracket generating at $q$ if and only if
\begin{equation} \label{Rlunion} \bigcup_{l\geq1} \spn \bigg\{ \sum_{1 \leq \alpha < \beta \leq n} \overline R^l(\overline e_\alpha, \overline e_\beta, \overline e_{i_1}, \dots, \overline e_{i_{2l}} ) W_{\alpha \beta} \big|_q \bigg\} = \ker \overline \pi_{*}|_q.\end{equation}
\end{proposition}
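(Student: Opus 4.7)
The plan is to compute $\Lie \calD$ on $F(M) \times F(\widehat M)$ in the locally symmetric setting, project down to $Q$ via $\varpi_*$, and identify the bracket-generating condition with \eqref{Rlunion} via a short column-extraction argument.

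First I set $Y_j := X_j + \widehat X_j$ as a local basis of $\calD$, and exploit $\nabla R = 0$, $\nabla \widehat R = 0$ together with Lemma \ref{localsymmetriclemma} (so $\nabla R^l = \nabla \widehat R^l = 0$ for all $l \geq 1$) and $[X_i, \widehat X_j] = 0$. By induction on depth, using Lemma \ref{WErelations}(b) and the identity $\sum_s \scrE_{s,\ldots}(R^l)\,\scrW_{k,s}(R) = \scrW_{k,\ldots}(R^{l+1})$ (immediate from the recursion defining $R^{l+1}$), one obtains
\begin{align*}
\mathrm{ad}_{Y_{k_{2l}}}\cdots\mathrm{ad}_{Y_{k_1}}(Y_j) &= \sum_s \scrE_{s,k_{2l},\ldots,k_1,j}(R^l)\,X_s + \sum_s \widehat\scrE_{s,k_{2l},\ldots,k_1,j}(\widehat R^l)\,\widehat X_s, \\
\mathrm{ad}_{Y_{k_{2l+1}}}\cdots\mathrm{ad}_{Y_{k_1}}(Y_j) &= -\scrW_{k_{2l+1},\ldots,k_1,j}(R^{l+1}) - \widehat\scrW_{k_{2l+1},\ldots,k_1,j}(\widehat R^{l+1}).
\end{align*}
Consequently $\Lie\calD = \calD + V + H$, where $V$ and $H$ are the spans of the odd- and even-depth iterated brackets, respectively.

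Next I project to $Q$. Using $\varpi_*\sigma_{\alpha\beta} = -\sum_{a,b} f_{a\alpha} f_{b\beta} W^\ell_{ab}$ and $\varpi_*\widehat\sigma_{\alpha\beta} = \sum_{a,b} \widehat f_{a\alpha} \widehat f_{b\beta} W^r_{ab}$ (deduced by inverting the relations from the proof of Theorem \ref{mathcalDtoD}), together with $W^r_{ab} = \sum_{p,s} q_{ap} q_{bs} W^\ell_{ps}$ and $\widehat f_j = qf_j$, a direct simplification gives
\[
\varpi_*\bigl(\scrW_{i_1,\ldots,i_{2l}}(R^l) + \widehat\scrW_{i_1,\ldots,i_{2l}}(\widehat R^l)\bigr) = -\sum_{\alpha<\beta} \overline R^l(\overline e_\alpha, \overline e_\beta, \overline e_{i_1}, \ldots, \overline e_{i_{2l}})\, W^\ell_{\alpha\beta},
\]
so $\varpi_* V|_q$ is precisely the subspace appearing in \eqref{Rlunion}. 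For necessity, every $h = \sum c_s X_s + \sum \widehat c_s \widehat X_s \in H$ satisfies $\overline\pi_*\varpi_* h = (\sum c_s f_s, q\sum \widehat c_s f_s)$; a short argument using $\widehat f_s = qf_s$ and the decomposition $\Lie_q D = D_q + \varpi_* V|_q + \varpi_* H|_q$ shows $\Lie_q D \cap \ker\overline\pi_*|_q = \varpi_* V|_q$, so bracket generating forces \eqref{Rlunion}.

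For sufficiency, set $\Theta^l_q(v_1,\ldots,v_{2l+2}) := R^l(v_1,\ldots,v_{2l+2}) - \widehat R^l(qv_1,\ldots,qv_{2l+2})$, so \eqref{Rlunion} is the condition \textbf{(V)} that $\bigcup_l \spn\{(\Theta^l_q(f_\alpha,f_\beta,f_{k_1},\ldots,f_{k_{2l}}))_{\alpha,\beta} : k_i\} = \so(n)$. Bracket generating additionally needs \textbf{(H)}: $\bigcup_l \spn\{(\Theta^l_q(f_s,f_k,f_{k_1},\ldots,f_{k_{2l}}))_s : k, k_i\} = \real^n$, which fills the non-diagonal horizontal direction of $\overline\pi_* \Lie_q D$. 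The implication (V) $\Rightarrow$ (H) is by column extraction: any $v \in \real^n$ with $v_\beta = 0$ is the $\beta$-th column of some antisymmetric matrix $B$ (set $B_{\alpha\beta} := v_\alpha$, $B_{\beta\alpha} := -v_\alpha$ for $\alpha \neq \beta$), and (V) expresses $B$ as a linear combination of $\Theta^l_q$-matrices whose $\beta$-th columns are exactly (H)-vectors; since $n \geq 2$, any $v \in \real^n$ splits as $v = v^{(1)} + v^{(2)}$ with $v^{(1)}_1 = v^{(2)}_2 = 0$, so $v$ lies in the (H)-span. The main obstacle is the inductive bracket computation in the first step, which requires careful tracking of indices across the alternating even- and odd-depth patterns; once this is in place, the projection formulas and the column-extraction argument proceed cleanly.
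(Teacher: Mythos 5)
Your proof is correct and follows essentially the same route as the paper: compute the iterated brackets of $\calD$ on $F(M)\times F(\widehat M)$ using $\nabla R^l = 0$ and Lemma \ref{WErelations}, observe that even-depth brackets give $\scrW$-type (vertical) terms in $R^l$, $\widehat R^l$ and odd-depth brackets give $\scrE$-type (horizontal mod $\calD$) terms, and project by $\varpi_*$. The one place where you genuinely add something: the paper's sufficiency direction is a single sentence asserting that once \eqref{Rlunion} holds, the odd brackets together with $\ker\overline\pi_*$ and $D_q$ span $T_qQ$, without explaining why the vertical condition forces the horizontal span to fill $\real^n$. Your column-extraction argument (condition (V) $\Rightarrow$ condition (H), exploiting antisymmetry of $R^l$ in its first two slots and $n\geq 2$ to decompose an arbitrary $v\in\real^n$ into two pieces each having a vanishing coordinate) makes this implication explicit and rigorous. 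That is a worthwhile clarification of a step the paper glosses over, but it does not constitute a different approach.
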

\begin{proof}
We will look at the brackets of $\calD$.
From Lemma \ref{localsymmetriclemma}, we know that for any $l \geq 1$,
$$\big[X_{i_1} , \big[ X_{i_2}, \big[ \cdots \big[ X_{i_{2l-1}}, X_{i_{2l}}\big] \cdots \big] = (-1)^{l} \scrW_{i_1,\dots,i_{2l}}(R^l),$$
$$\big[X_{i_1} , \big[ X_{i_2}, \big[ \cdots \big[ X_{i_{2l}}, X_{i_{2l+1}}\big] \cdots \big] = (-1)^{l+1} \sum_{s=1}^n \scrE_{s, i_1,\dots, i_{2l+1}}(R^l) X_s. $$
Analogues relations hold for the brackets of $\widehat X_j$. Projecting the even brackets to $T_qQ$, we get the left hand side of \eqref{Rlunion}, which has to be equal to all of $\ker \overline \pi_{*}|_q$ in order for $D$ to be bracket generating. Conversely, if \eqref{Rlunion} holds, then the projection of the odd brackets will span $T_qQ$ together with $\ker \overline \pi_{*}|_q$ and $D_q$. \end{proof}

\subsection{Rolling on a complete manifold}
The fact that one of the manifolds is complete, makes it easier to give statements about complete controllability.
The reason for this, can be summed up in the following Lemma.
\begin{lemma} \label{completelemma}
Assume that $\widehat M$ is complete. Let $t \mapsto m(t)$ be any absolutely continuous curve in $M$ with domain $[0, \tau]$. Let $q_0 \in Q$ be any point with $\pi(q_0) = m(0)$.

Then there is a rolling $t \mapsto q(t)$ of $M$ on $\widehat M$, so that
$$q(0) = q_0, \qquad \pi \circ q(t) = m(t) \text{ for any } t \in [0,\tau].$$
\end{lemma}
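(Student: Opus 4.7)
The plan is to invoke Corollary \ref{stochasticroll} and reduce the problem to solving a non-autonomous ODE on the frame bundle $F(\widehat M)$. First, lift $q_0$ to a pair $(f_0, \widehat f_0) \in F(M) \times F(\widehat M)$ with $\varpi(f_0, \widehat f_0) = q_0$, which is done by picking any $f_0 \in F_{m(0)}(M)$ and setting $\widehat f_0 := q_0 \circ f_0$. Parallel transport $f_0$ along $m(t)$ to obtain an absolutely continuous horizontal lift $f:[0,\tau] \to F(M)$; this exists with no completeness hypothesis on $M$, since parallel transport along a fixed absolutely continuous curve is governed by a linear ODE in any local trivialization. Define
$$u(t) := f(t)^{-1}\bigl(\dot m(t)\bigr) \in \real^n,$$
so that $u \in L^1([0,\tau], \real^n)$ with $|u(t)| = |\dot m(t)|$ almost everywhere.

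Next, consider the time-dependent ODE on $F(\widehat M)$
$$\dot{\widehat f}(t) = \sum_{j=1}^n u_j(t)\, \widehat X_j\big|_{\widehat f(t)}, \qquad \widehat f(0) = \widehat f_0,$$
with $\widehat X_j$ as in \eqref{basisEhresmann}. Carath\'eodory's theorem provides a maximal solution on a half-open interval $[0, T)$ with $T \leq \tau$, and on this interval the projection $\widehat m(t) := \widehat \pi(\widehat f(t))$ satisfies $\dot{\widehat m}(t) = \widehat f(t)(u(t))$; in particular $|\dot{\widehat m}(t)| = |u(t)|$. If we can show that $T = \tau$, then setting $q(t) := \varpi(f(t), \widehat f(t))$ yields a curve in $Q$ such that $(f,\widehat f)$ satisfies conditions (i) and (ii) of Corollary \ref{stochasticroll} by construction, so $q(t)$ is a rolling without twisting or slipping, with $\pi\circ q(t) = \widetilde\pi(f(t)) = m(t)$ and $q(0) = q_0$.

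The only place where completeness of $\widehat M$ is needed is in showing $T = \tau$, and I expect this continuation argument to be the main (though standard) obstacle. Suppose for contradiction that $T < \tau$. Since $|\dot{\widehat m}(t)| = |\dot m(t)|$, the length of $\widehat m$ on $[0,T)$ is bounded by $\int_0^\tau |\dot m(s)|\, ds < \infty$, so $\widehat m(t)$ is Cauchy in the Riemannian distance as $t \to T^-$. By Hopf-Rinow, $\widehat m(t)$ converges to some $\widehat m^\ast \in \widehat M$; pick a compact neighborhood $K$ of $\widehat m^\ast$. Because $\SO(n)$ is compact, $\widehat \pi^{-1}(K)$ is compact, hence the vector fields $\widehat X_j$ are uniformly bounded there in any auxiliary Riemannian metric on $F(\widehat M)$. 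Combined with $u \in L^1$, this yields that $\widehat f(t)$ is Cauchy in $\widehat \pi^{-1}(K)$ as $t \to T^-$, so it converges to some $\widehat f^\ast \in F(\widehat M)$. Applying Carath\'eodory's theorem again starting from $\widehat f^\ast$ at time $T$ produces a local extension past $T$, contradicting maximality and completing the proof.
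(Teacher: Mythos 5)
Your proof is correct, but it proceeds by a genuinely different route than the paper. The paper first invokes a result (cited from Agrachev--Sachkov, p.~386) asserting the lemma when \emph{both} manifolds are complete, and then handles the general case by factoring through $\real^n$: it develops $m(t)$ to a curve $\widetilde m(t)$ in $\real^n$, producing a rolling $\widetilde q(t)$ of $M$ on $\real^n$, then applies the both-complete case to roll $\real^n$ on $\widehat M$ along $\widetilde m(t)$, and finally composes the two rollings $q(t) = \widehat q(t) \circ \widetilde q(t)$. By contrast, you work directly on $F(\widehat M)$: you antidevelop $m(t)$ to $u(t) = f(t)^{-1}(\dot m(t))$, solve the Carath\'eodory ODE $\dot{\widehat f} = \sum_j u_j \widehat X_j|_{\widehat f}$, and show the maximal solution reaches $t=\tau$ by a standard continuation argument --- finite length plus Hopf--Rinow gives convergence of $\widehat m(t)$, compactness of $\widehat\pi^{-1}(K)$ gives convergence of $\widehat f(t)$, and the solution extends past any $T<\tau$. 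Your version has the advantage of being self-contained (it does not delegate the both-complete case to an external reference and avoids the detour through $\real^n$), at the cost of having to carry out the ODE completion argument explicitly; the paper's version is shorter on the page because it reuses machinery, and the factorization through $\real^n$ gives a cleaner conceptual picture of why only completeness of $\widehat M$ is needed. Both are correct; your proof would in fact serve as a proof of the cited both-complete case as well, so in some sense it is the more fundamental argument.
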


\begin{proof}
If we assume first that both $M$ and $\widehat M$ are complete, then such a rolling $q(t)$ exist. The proof for this can be found in \cite[p. 386]{AS}. This proof is done for the case when $M$ and $\widehat M$ are two dimensional, but can, with simple modifications, be generalized to higher dimensions.

Assume now that $M$ is not complete. Let $f(t)$ be a lifting of $m(t)$ to a curve in $F(M)$ that is horizontal to the Ehresmann connection, i.e. each $f_j(t)$ is parallel along $m(t)$. Define the curve in $\real^n$ by,
$$\widetilde m(t) = \int_0^t f^{-1}(s)\big(\dot m(s)\big) ds.$$
Let $\widetilde f(t)$ be a lifting of $\widetilde m(t)$ to a curve $F(\real^n)$, so that each $\widetilde f_j(t)$ is parallel along $\widetilde m(t).$ Then, from Corollary \ref{stochasticroll}, $\widetilde q(t) = \varpi(f(t), \widetilde f(t))$ is a rolling of $M$ on $\real^n$.

Let $\widetilde q_0 := \widetilde q(0)$. Since both $\real^n$ and $\widehat M$ are complete, we know that there is a rolling $\widehat q(t)$ of $\real^n$ on $\widehat M$ along $\widetilde m(t)$, so that $\widehat q(0) = q_0 \circ \widetilde q_0^{-1}$. We can then obtain our desired rolling by defining $q(t) = \widehat q(t) \circ \widetilde q(t).$
\end{proof}

\begin{proposition}
Let the manifold $\widehat M$ be complete. Assume that there is a point $m \in M$, so that
$D_q$ is bracket generating for every point $q \in \pi^{-1}(m)$. Then the system is completely controllable. 
\end{proposition}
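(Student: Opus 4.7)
The plan is to show $\calO_{q_0} = Q$ for every $q_0 \in Q$, by first establishing that every orbit in $Q$ is open, and then invoking connectedness of $Q$ to deduce that there can be only one orbit.

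First I would use Lemma \ref{completelemma} to show that every orbit meets the fiber $\pi^{-1}(m)$. Given $q_0 \in Q$, pick an absolutely continuous curve $t \mapsto m(t)$ in $M$ with $m(0) = \pi(q_0)$ and $m(\tau) = m$; this is possible because $M$ is connected. Completeness of $\widehat M$ and Lemma \ref{completelemma} then produce a rolling $q(t)$ with $q(0) = q_0$ and $\pi \circ q(t) = m(t)$, so $q_1 := q(\tau)$ lies in $\calO_{q_0} \cap \pi^{-1}(m)$. Because orbits are the equivalence classes of the reachability relation on $D$, we have $\calO_{q_0} = \calO_{q_1}$; and since $D$ is bracket generating at $q_1$ by hypothesis, this common orbit is an open submanifold of $Q$ by the Orbit Theorem as recalled in Section \ref{rollingdistribution}.

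Finally, I would observe that $Q$ is connected: it is a fiber bundle over the connected base $M \times \widehat M$ with fibers diffeomorphic to $\SO(n)$, which is itself connected for $n \geq 2$. Applying the previous paragraph to every basepoint shows that the orbits partition $Q$ into open subsets, and connectedness forces there to be only one orbit, so $\calO_{q_0} = Q$ as required.

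I do not anticipate any real obstacle beyond assembling these ingredients; the crux is simply to notice that completeness of $\widehat M$, through Lemma \ref{completelemma}, allows us to promote the bracket-generating hypothesis on the single fiber $\pi^{-1}(m)$ to openness of every orbit of $D$ throughout $Q$.
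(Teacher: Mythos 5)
Your argument is correct and matches the paper's proof in structure: use Lemma \ref{completelemma} to roll from an arbitrary $q_0$ to a point $q_1$ in the distinguished fiber $\pi^{-1}(m)$, conclude $\calO_{q_0}=\calO_{q_1}$ is open by the Orbit Theorem, and deduce from connectedness of $Q$ that there is a single orbit. The paper's version is terser but relies on the same openness-plus-connectedness step that you spell out explicitly.
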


\begin{proof}
Let $q_0$ be any element in $Q$. From Lemma \ref{completelemma} we know that there is a rolling $q(t)$ from $q_0$ to some point $q_1 \in \pi^{-1}(m)$. Hence $\calO_{q_0} = \calO_{q_1}$. But since $D$ is bracket generating in $q_1$, $\calO_{q_1}$ is an open submanifold. Since $q_0$ was arbitrary, we have local controllability at every point,  so $\calO_{q_0} = Q$ for any $q_0 \in Q$.
\end{proof}

\begin{corollary}
Let $\widehat M$ be a manifold that is both complete and flat. Assume that there is a point $m \in M$, so that for some (and hence any) orthonormal basis $\{v_j\}_{j=1}^n$ of $T_mM$,
$$\det\left( R(v_\alpha, v_\beta, v_i, v_j) \right) \neq 0.$$
$$1\leq \alpha < \beta \leq 1 \text{ are row indices}, \quad 1\leq i < j \leq 1 \text{ are column indices}.$$
Then the system is completely controllable.
\end{corollary}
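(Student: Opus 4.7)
The plan is to reduce this corollary to the previous proposition by verifying that the determinant hypothesis forces $\overline\calR_q$ to be an isomorphism at every point $q \in \pi^{-1}(m)$, and then invoking completeness of $\widehat M$ to propagate local controllability at the fiber over $m$ to global controllability.

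First, I would observe that flatness of $\widehat M$ gives $\widehat R \equiv 0$, so the tensor $\overline R = \pi^* R - \widehat\pi^* \widehat R$ reduces to $\pi^* R$. Now fix any $q \in \pi^{-1}(m)$ and let $\{v_j\}_{j=1}^n$ be an orthonormal basis of $T_m M$. Lifting these to the canonical orthonormal basis $\{\overline v_j\}_{j=1}^n$ of $D_q$ (so that $\pi_* \overline v_j = v_j$ and $\widehat\pi_* \overline v_j = q v_j$), the matrix representing $\overline\calR_q$ with respect to the induced basis $\{\overline v_\alpha \wedge \overline v_\beta\}_{\alpha<\beta}$ of $\bigwedge^2 D_q$ is precisely
\[
\Big( R(v_\alpha, v_\beta, v_i, v_j) - \widehat R(q v_\alpha, q v_\beta, q v_i, q v_j) \Big) = \Big( R(v_\alpha, v_\beta, v_i, v_j) \Big),
\]
since $\widehat R = 0$. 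By hypothesis the determinant of this matrix is nonzero, hence $\overline\calR_q$ is a linear isomorphism.

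Applying Theorem~\ref{maintheorem}(b), the rolling distribution $D$ is bracket generating of step $3$ at $q$. Since $q \in \pi^{-1}(m)$ was arbitrary, this holds at every point of the fiber over $m$. The previous proposition, which requires completeness of $\widehat M$ plus bracket generation on a full fiber $\pi^{-1}(m)$, then yields complete controllability of the rolling system.

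The only subtlety worth flagging is the parenthetical ``for some (and hence any)'' in the statement: one should check that the determinant condition is basis-independent. This follows because a change of orthonormal basis $v_i = \sum_k U_{ki} v_k'$ with $U \in \SO(n)$ transforms the matrix $(R(v_\alpha, v_\beta, v_i, v_j))$ by conjugation under the induced action of $U$ on $\bigwedge^2 \real^n$, whose determinant equals $(\det U)^{n-1} = 1$; thus the determinant in question is a genuine invariant of $R|_m$. Beyond this routine verification, no real obstacle arises: the proof is essentially an assembly of Theorem~\ref{maintheorem}(b) and the preceding proposition on completeness.
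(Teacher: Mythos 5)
Your proof is correct and follows the same route the paper implicitly intends: flatness of $\widehat M$ kills the $\widehat R$ term so that $\overline\calR_q$ depends only on $R$ at $m$, the determinant hypothesis makes $\overline\calR_q$ an isomorphism (hence, by Theorem~\ref{maintheorem}(b), $D$ is bracket generating of step~3) uniformly over the whole fiber $\pi^{-1}(m)$, and the preceding proposition on complete $\widehat M$ upgrades this to complete controllability. Your added check of basis-independence via $\det\bigl(\textstyle\bigwedge^2 U\bigr) = (\det U)^{n-1} = 1$ is a welcome clarification (the paper leaves it implicit), though strictly speaking the matrix transforms by congruence rather than conjugation; for $U\in\SO(n)$ this makes no difference to the determinant.
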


\begin{example}
Let $M$ be the surface in $\real^3$, defined by
$$M = \{(x_1,x_2,x_3) \in \real^3 \, : \, \sqrt{x_2^2 + x_3^2} = 1-f(x_1), |x_1| < \tfrac{3}{2} \},$$
where
$$f(x_1) = \left\{\begin{array}{ll} 0 & \text{if } |x_1| \leq 1 \\
e^{-\tfrac{1}{(|x_1|-1)^2}} & \text{if } 1< |x_1| < \frac{3}{2}
 \end{array} \right. .$$
Define the following orthonormal basis on $M$,
$$e_1 = \frac{1}{\sqrt{1 + f'(x_1)^2}} \left(\partial_{x_1} - \frac{f'(x_1)}{1 - f(x_1)}
(x_2 \partial_{x_2} + x_3 \partial_{x_3}) \right),$$
$$e_2 = \frac{1}{1 - f(x_1)}(-x_3 \partial_{x_2} + x_2 \partial_{x_3}).$$
All Christoffel symbols are determined by
$$\Gamma_{12}^1 = \langle e_1, \nabla_{e_1} e_2 \rangle = 0, \qquad
\Gamma_{22}^1 = \langle e_1, \nabla_{e_2} e_2 \rangle = \frac{f'(x_1)}{(1 - f(x_1))\sqrt{1 + f'(x_1)^2}}.$$
and from this we can compute the Gaussian curvature
$$\varkappa(x) = \frac{f''(x_1)}{(1+ f'(x_1)^2)^2 (1 - f(x_1))}.$$
Inserting the value of $f(x)$ we obtain that $\kappa(x) = 0$ for $|x_1| \leq 1$, but strictly positive for $1< |x_1| < \tfrac{3}{2}$.

It follows that , if we roll $M$ on $\real^2$, the system is completely controllable. Observe that in this case $$\Lie D = \spn \left\{e_1 + q e_1, e_2 + qe_2 + \Gamma_{22}^1 W_{12}, f(x_1) W_{12}, f(x_1) qe_1, f(x_1) qe_2\right\},$$
fails to be locally finitely generated around points fulfilling $|x_1| =1$.
\end{example}

\section{Further generalization of rolling without twisting or slipping}
Up until now, we have only been concerned with rolling two Riemannian manifolds on each other without twisting or slipping. The definition can easily be generalized to manifolds with an affine connection. We introduce the generalization here.

Let $M$ and $\widehat M$ be two connected manifolds, with respective affine connections $\nabla$ and $\widehat \nabla$. Then a rolling without twisting or slipping can be seen as an absolutely continuous curve $q(t)$ into the manifold
$$\mathcal Q = \left\{ q \in \GL(T_m, T_{\widehat m} \widehat M) \, : m \in M, \widehat m \in \widehat M \right\}.$$
satisfying (no slip condition) and (no twist condition) from section \ref{definition}.

Reexamining the proofs, it turns out that the description of many of the results we had for rolling related to the Levi-Civita connection, generalizes to general connections. We will describe this here briefly.

Define the tautological one-form on $\calF(M)$ as the $\real^n$-valued one-form obtained by
$$\theta(v) = f^{-1}(v), \qquad v \in T_f\calF(M).$$
Define $\widehat \theta$ similarly, while $\omega$ and $\widehat \omega$ are defined in terms of the connection. We will also choose Riemannian structures on $M$ and $\widehat M$ in order to present the results in a similar way and make them easier to compare, but these do not need to be compatible with the connections.

The rolling distribution $D$, will still be an $n$ dimensional distribution, and the relation in Theorem \ref{mathcalDtoD} is still valid. $D$ is locally spanned by vector fields
$$\overline e_j = e_j + qe_j + \sum_{\alpha, \beta =1}^n \left(\left\langle e_\alpha, \nabla_{e_j} e_\beta \right\rangle
- \left\langle qe_\alpha, \widehat \nabla_{qe_j} q e_\beta\right\rangle \right) E_{ij}^{\ell}.$$
where $E_{ij}^{\ell} = \sum_{s=1} q_{ri} \frac{\partial}{\partial q_{rj}}.$

The study of controllability becomes harder, since torsion may appear in the equations \eqref{Cartanequations}. However, this is actually the only complication. Let $\epsilon_{\alpha\beta} \in \gl(n,\real)$ be the matrix with only $1$ in at entry $\alpha\beta$ and zero otherwise. Define $X_j$ as in equation \eqref{basisEhresmann}. If we then modify the definition of $\scrW_{i_1,\dots, i_l}(T)$ to
$$\scrW_{i_1,\dots, i_l}(T) = \sum_{\alpha,\beta=1}^n \scrE_{\alpha,\beta,i_1,\dots, i_l}(T) \sigma(\epsilon_{\alpha\beta}),$$
now defined for any tensor $T$, then Lemma \ref{WErelations} still holds for any connection. Since all of our results follow from Lemma \ref{WErelations} and \ref{Xbracketlemma}, it follows that our results also holds for any pair of manifolds with torsion free connections.


\begin{thebibliography}{99}

\bibitem{A} {\sc A.~Agrachev}, {\it Rolling balls and octonions}. Proc. Steklov Inst. Math. {\bf 258} (2007), 13--22.

\bibitem{AS} {\sc A.~Agrachev, Y.~Sachkov}, Control Theory from the Geometric Viewpoint, Springer, 2004.

\bibitem{AmSi53} W.~Ambrose, I.~M.~Singer,
{\it A theorem on holonomy.}
Trans. Amer. Math. Soc. {\bf 75}, (1953). 428--43. 

\bibitem{BH} {\sc R. Bryant, L. Hsu}, {\it Rigidity of integral curves of rank $2$ distributions}. Invent. Math. {\bf 114} (1993), no. 2, 435--461.

\bibitem{BM} {\sc G.~Bor, R.~Montgomery}, {\it $G_2$ and the rolling distribution}. L'Ens. Math. (2) {\bf 55} (2009), 157--196.

\bibitem{Chitour} {\sc Y. Chitour, P. Kokkonen} {\it Rolling Manifolds: Intrinsic Formulation and Controllability}
arXiv:1011.2925



\bibitem{Chow} {\sc W.~L.~Chow}, {\it Uber Systeme von linearen partiellen Differentialgleichungen erster Ordnung}, Math. Ann. {\bf 117} (1939), 98--105.

\bibitem{GGMS} {\sc M.~Godoy~M., E.~Grong, I.~Markina, F.~Silva~Leite} {\it An intrinsic formulation
of the Rolling Manifold problem.} arXiv:1008.1856

\bibitem{Hermann} {\sc R.~Hermann}, {\it On the accessibility problem in control theory}, Internat. Sympos. Nonlinear Differential Equations and Nonlinear Mechanics, Academic Press, New York (1963), 325--332.

\bibitem{Hsu} {\sc E.~P.~Hsu} {\it Stochastic Analysis on Manifolds} American Mathematical Society 2002

\bibitem{HS} {\sc K.~H{\"u}per, F.~Silva Leite}, {\it On the geometry of rolling and interpolation curves on {$S\sp n$}, {${\rm SO}\sb n$}, and {G}rassmann manifolds}. J. Dyn. Control Syst. {\bf 13} (2007), no. 4, 467--502.

\bibitem{HKS} {\sc K.~H{\"u}per, M. Kleinsteuber, F.~Silva Leite}, {\it Rolling {S}tiefel manifolds}. Internat. J. Systems Sci. {\bf 39} (2008), no. 9, 881--887.

\bibitem{IW} {\sc N. Ikeda, S. Watanabe} {\it Stochastic Differential Equations and Diffusion Processes}
North-Holland Publishing Company (1981)

\bibitem{Jost} {\sc J. Jost} {\it Riemannian Geometry and Geometric Analysis}, Springer-Verlag Berlin Heidelberg,2005

\bibitem{Jur} {\sc V. Jurdjevic} {\it Geometric Control Theory} Cambridge University Press 1997

\bibitem{JZ} {\sc V. Jurdjevic, J.~A.~Zimmerman}  {\it Rolling sphere problems on spaces of constant curvature}
Math. Proc. Cambridge Philos. Soc. {\bf 144} (2008), no. 3, 729--747


\bibitem{MB}{\sc A. Marigo, A. Bicchi}{\it Rolling Bodies with Regular Surface: Controllability
Theory and Applications} IEEE Transactions on Automatic Control {\bf 45} (2000), no. 9, p 1586--1599.

\bibitem{Mon02} R.~Montgomery,
{\it A tour of subriemannian geometries, their geodesics and applications}, Mathematical Surveys and Monographs, {\bf 91}. American Mathematical Society, Providence, RI, 2002. 



\bibitem{Rashevsky} {\sc P.~K.~Rashevski{\u\i}}, {\it About connecting two points of complete nonholonomic space by admissible curve}, Uch. Zapiski Ped. Inst. K.~Liebknecht {\bf 2} (1938), 83--94.


\bibitem{Sharpe} {\sc R. W. Sharpe}, Differential geometry. GTM, 166. Springer-Verlag, New York, 1997. 

\bibitem{Shen} {\sc Y.~Shen, K. H\"uper, F.~Silva Leite} {\it Smooth Interpolation of Orientation by Rolling and Wrapping for Robot Motion Planning}
Proceedings of the 2006 IEEE International Conference on Robotics and Automation (ICRA2006), Orlando, USA, May 2006.


\bibitem{Suss} {\sc H.~J.~Sussmann}, {\it Orbits of families of vector fields and integrability of distributions}, Internat. Sympos. Trans. Amer. Math. Soc. {\bf 180} (1973), 171--188.

\bibitem{Zimm}{J.~A.~Zimmerman}, {\it Optimal control of the sphere $S^n$ rolling on $E^n$},  Math. Control Signals Systems {\bf 17} (2005), no. 1, 14--37.

\end{thebibliography}
\end{document}